\theoremstyle{plain}
\newtheorem{theorem}{Theorem}[section]
\newtheorem{corollary}[theorem]{Corollary}
\newtheorem{proposition}[theorem]{Proposition}
\newtheorem{lemma}[theorem]{Lemma}
\theoremstyle{definition}
\newtheorem{definition}[theorem]{Definition}
\newtheorem{remark}[theorem]{Remark}
\newtheorem{example}[theorem]{Example}
\numberwithin{equation}{section}
\begin{document}

\title[Atomic pseudo-valuation Domains]{Atomic pseudo-valuation Domains}

\author{Elijah Stines}
\address{Department of Mathematics\\
         Iowa State University\\
         Ames, Iowa 50011, U.S.A.}
\email{ejstines@iastate.edu} \urladdr{http://www.public.iastate.edu/~ejstines/}

\maketitle

\begin{abstract}
   Pseudo-valuation domains have been studied since their introduction in 1978 by Hedstrom and Houston. Related objects, boundary valuation domains, were introduced by Maney in 2004. Here, it is shown that the class of atomic pseudo-valuation domains coincides with the class of boundary valuation domains. It is also shown that power series rings and generalized power series rings give examples of pseudo-valuation domains whose congruence lattices can be characterized. The paper also introduces, and makes use of, a sufficient condition on the group of divisibility of a domain to guarantee that it is a pseudo-valuation domain.

\end{abstract}

Keywords: Factorization, Group of Divisibility, Pseudo-Valuation Domain

MSC: 06, 13

\maketitle

\tableofcontents

\section{Introduction}\label{S:intro}

Given an integral domain $R$, a prime ideal $I$ of $R$ is said to be strongly prime if, for every $a,b \in QF(R)$ (the quotient field of $R$), $ab\in I$ implies either $a\in I$ or $b\in I$. In their 1978 paper \cite{MR0485811}, Hedstrom and Houston investigated the class of (integral) domains where every prime ideal is strongly prime. They named these domains pseudo-valuation domains.

Recall that a domain $V$ is a valuation domain if, for every $a\in QF(V)$, either $a\in V$ or $a^{-1} \in V$. An equivalent definition of PVDs is that they are the domains $R$ which have unique valuation overrings having the same po-set of prime ideals.

\begin{definition}
    Given a domain $R$ in which every nonzero element can be factored into unique lengths of irreducibles (in other words, $R$ is a half factorial domain or HFD), $R$ is a \emph{boundary valuation domain} or BVD if every element of the quotient field of $R$ with more irreducibles on the numerator is in $R$ itself. That is to say, for every $\frac{a}{b}\in QF(R)$ with irreducible factorization $\frac{\pi_1...\pi_n}{\eta_1...\eta_m}$, with $n>m$, $\frac{a}{b}\in R$.
\end{definition}

The interplay between PVDs and their valuation overrings was examined implicitly by Maney in \cite{MR2032466}. In that paper, the class of all BVDs was characterized solely in terms of necessary and sufficient divisibility properties. The main result of the present paper is a complete characterization of all atomic PVDs in terms of their divisibility properties. It is shown that the class of atomic PVDs is precisely the class of BVDs. This characterization is an important tool for the study of the structure of domains by using their divisibility properties.

In addition, we use the investigation of the divisibility structure of PVDs to construct the lattice of all congruences for domains $R$ of a certain subclass of PVDs, those arising from restricting coefficients of power series rings. Taking this notion a step further, it is seen that many more examples of PVDs can be constructed in a similar fashion by generalizing the exponents from a power series ring.

\section{Preliminary Facts}\label{S:PVDs}

It is essential to recall some basic facts about groups of divisibility and ordered groups from \cite{MR598630} and \cite{MR720862}. It is also necessary to record some facts about PVDs proved in \cite{MR0485811} that are relevant to this investigation.

\begin{definition}
    Given a group $G$ with a partial order $\leq$ on the set $G$, we say that $G$ is a \emph{partially ordered group} (po-group) if, for all $a,b,c\in G$ with $a\leq b$, $ca\leq cb$ and $ac\leq bc$. Furthermore, given a po-group $G$, define $G^+=\{ a \in G | 0 \leq a \}$ which will be called the \emph{positive cone} of $G$.

    Po-groups $G$ and $H$ are said to be \emph{order isomorphic}, symbolically $G\cong_o H$, if there exists a group isomorphism $\phi: G\rightarrow H$ such that both $\phi$ and $\phi^{-1}$ are order preserving. It is evident that a group homomorphism $\phi$ is order preserving if and only if $\phi(G^+)\subset H^+$.
\end{definition}

\begin{definition}\label{D:groupofdivis}

 Let $R$ be a domain. We denote the subset of nonzero elements $R^\sharp$ and group of units $U(R)$. The \emph{group of divisibility} of $R$ is the quotient group $G(R):=QF(R)^\sharp / U(R)$.
For any integral domain, $G(R)$ is partially ordered by divisibility $|$, meaning $\alpha U(R) \leq \beta U(R)$ if and only if $\alpha | \beta$, that is $\exists$ $r\in R$ such that $\alpha r=\beta$.
\end{definition}

\begin{proposition} \cite{MR0485811}\label{P:localdomain}
    The set of prime ideals of a PVD $R$ is linearly ordered. As a consequence of, all PVDs are local.
\end{proposition}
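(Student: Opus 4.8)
The plan is to first isolate a single absorption property of strongly prime ideals and then deploy it to rule out incomparable primes. Specifically, I would begin by proving the following lemma: if $P$ is a strongly prime ideal of $R$ and $z \in QF(R) \setminus R$, then $z^{-1}P \subseteq P$. To see this, take any $p \in P$ and write $p = z \cdot (z^{-1}p)$, an identity in $QF(R)$. Since $P$ is strongly prime, either $z \in P$ or $z^{-1}p \in P$; but $P \subseteq R$ while $z \notin R$, so the first option is impossible, forcing $z^{-1}p \in P$. This absorption statement is the technical heart of the argument, and it is precisely where the hypothesis that each prime is \emph{strongly} prime (rather than merely prime) is used.

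With the lemma in hand, I would argue by contradiction that any two primes $P$ and $Q$ of $R$ are comparable. Suppose not; then there exist $x \in P \setminus Q$ and $y \in Q \setminus P$. First I would observe that $x/y \notin R$: otherwise $x = (x/y)\,y \in Q$, since $y \in Q$ and $Q$ is an ideal, contradicting $x \notin Q$. Now apply the lemma to the strongly prime ideal $P$ with $z = x/y \notin R$, obtaining $(y/x)\,P \subseteq P$. Taking $p = x \in P$ yields $y = (y/x)\,x \in P$, contradicting $y \notin P$. Hence no incomparable pair can exist, and the set of prime ideals of $R$ is linearly ordered by inclusion.

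For the locality statement, I would note that the maximal ideals of $R$ are in particular prime, and therefore lie in the chain just established. Any two distinct maximal ideals would then be comparable, but comparability together with maximality forces equality; thus $R$ has at most one maximal ideal. Since every nonzero commutative ring has at least one maximal ideal, this maximal ideal exists and is unique, so $R$ is local. The only genuine obstacle is the lemma: once the absorption $z^{-1}P \subseteq P$ is available, everything else is a short manipulation of fractions together with the elementary fact that a chain of maximal ideals is a single point.
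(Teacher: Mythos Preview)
Your argument is correct. The absorption lemma is exactly right: for $z \in QF(R)\setminus R$ and $p \in P$, the factorization $p = z\cdot(z^{-1}p)$ together with strong primeness and $z\notin R\supseteq P$ forces $z^{-1}p\in P$. The contradiction step is also clean; note that the elements $x\in P\setminus Q$ and $y\in Q\setminus P$ are automatically nonzero since $0$ lies in every ideal, so the fractions $x/y$ and $y/x$ are well defined in $QF(R)$. The locality deduction is standard.

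As for comparison: the present paper does not actually supply a proof of this proposition. It is quoted verbatim from Hedstrom and Houston's original paper and recorded here only as background in the Preliminaries section. Your argument is essentially the classical one from that source, so there is nothing to contrast. (The paper does later prove, in Lemmas~\ref{L:lem1}--\ref{L:lem2}, that primes are linearly ordered under the separate hypothesis $G(R)\cong_o L\circ A$, via Mott's correspondence between saturated multiplicative sets and convex directed subgroups; but that is a different statement with a different hypothesis, not a proof of the proposition you were asked about.)
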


The following theorem shows the importance of the unique maximal ideal $M$ of a PVD.

\begin{theorem}\cite{MR0485811} \label{T:uniqueoverring}
For a given local domain $R$ with maximal ideal $M$, the following are equivalent:
    \begin{enumerate}
    \item $R$ is a PVD;

    \item $R$ has a unique valuation overring $V$ with maximal ideal $M$;

    \item $R$ has a unique maximal ideal $M$ and $M$ is strongly prime;

    \item There exists a valuation overring $V$ in which every prime ideal of $R$ is a prime ideal of $V$.
    \end{enumerate}
\end{theorem}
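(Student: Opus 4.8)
The plan is to prove the four conditions equivalent by the cycle $(1)\Rightarrow(3)\Rightarrow(2)\Rightarrow(4)\Rightarrow(1)$, with strong primeness of the maximal ideal (condition (3)) serving as the pivot and the overring
\[
V:=(M:_{K}M)=\{x\in K : xM\subseteq M\}, \qquad K:=QF(R),
\]
as the central object. The implication $(1)\Rightarrow(3)$ is immediate: $R$ is local with unique maximal ideal $M$ by hypothesis (and by Proposition~\ref{P:localdomain}), $M$ is prime, hence strongly prime by (1).

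The heart of the argument is $(3)\Rightarrow(2)$, and I expect this to be the main obstacle. Assuming $M$ strongly prime, I would first note that $V$ is a subring of $K$ containing $R$ and having $M$ as an ideal. To see $V$ is a valuation domain, take $x\in K\setminus V$, so $xm\notin M$ for some $m\in M$; for any $m'\in M$ the product $(x^{-1}m')(xm)=mm'\in M$, and since $xm\notin M$ strong primeness forces $x^{-1}m'\in M$, whence $x^{-1}M\subseteq M$, i.e.\ $x^{-1}\in V$. A symmetric use of strong primeness shows $M$ is exactly the maximal ideal of $V$: if $y\in V$ is a nonunit then $y^{-1}\notin V$, so $y^{-1}m\notin M$ for some $m\in M$, and $y\cdot(y^{-1}m)=m\in M$ gives $y\in M$. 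Uniqueness is then a short computation: any valuation overring $W$ with maximal ideal $M$ satisfies $W\subseteq(M:M)=V$ because $M$ is a $W$-ideal, while $x\in V\setminus W$ would give $x^{-1}\in M$ and hence $1=x\cdot x^{-1}\in xM\subseteq M$, a contradiction; so $W=V$.

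For $(2)\Rightarrow(4)$ I would exploit the pullback structure forced by a common maximal ideal. With $V$ a valuation domain, $M=M_{V}=M_{R}\subseteq R$, and $V\neq R$, the conductor $(R:V)$ equals $M$, since any $s\in R\setminus M$ is a unit and would force $V=R$. Given a prime $P$ of $R$ with $P\subsetneq M$, pick $s\in M\setminus P$; then $sV\subseteq R$, so for $v\in V$ and $p\in P$ one has $s(vp)=(sv)p\in P$ with $vp\in M\subseteq R$ and $s\notin P$, whence $vp\in P$ and $P$ is a $V$-ideal. Primality of $P$ in $V$ follows by the same device, using that every nonunit of $V$ lies in $M\subseteq R$; the cases $P=M$ and $P=0$ are handled directly. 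Finally, $(4)\Rightarrow(1)$ is the easy end of the cycle: each prime $P$ of $R$ is a prime of the valuation domain $V$, and every prime of a valuation domain is strongly prime, since for $ab\in P$ with, say, $a/b\in V$ one has $a^{2}=(a/b)(ab)\in P$ and hence $a\in P$; as strong primeness is a condition on $K$ and the set $P$ alone, it transfers back to $R$, giving (1).
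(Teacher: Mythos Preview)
The paper does not supply its own proof of this theorem: it is quoted from Hedstrom and Houston \cite{MR0485811} and stated without argument, so there is no in-paper proof to compare your proposal against. Your cycle $(1)\Rightarrow(3)\Rightarrow(2)\Rightarrow(4)\Rightarrow(1)$ via the overring $V=(M:_K M)$ is essentially the classical Hedstrom--Houston argument and is correct. Two small points worth tightening: in $(3)\Rightarrow(2)$ and $(2)\Rightarrow(4)$ you should dispose of the degenerate cases $M=0$ (then $R$ is a field and everything is trivial) and $V=R$ (then $R$ is already a valuation domain) before invoking $(M:M)$ or the conductor; and in $(4)\Rightarrow(1)$ the step ``$a^{2}\in P$ hence $a\in P$'' deserves one more line, since $a$ is a priori only in $K$: if $a\in V$ use primality of $P$ in $V$, and if $a^{-1}\in V$ then $a=a^{-1}\cdot a^{2}\in VP\subseteq P$. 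With those edge cases noted, the proposal is complete.
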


\section{PVDs From Lexicographic Sums}

In this section, conditions are placed on the group of divisibility of a domain $R$, sufficient for $R$ to be a PVD. First, recall the following definitions from ring theory and the theory of po-sets.

\begin{definition}
    A subset $S$ of a ring $R$ is said to be \emph{saturated multiplicative} if it is a wall under multiplication. That is, $xy\in S$ if and only if $x \in S$ and $y\in S$.
\end{definition}

\begin{definition}
    For a po-set $X$, a subset $C$ is \emph{convex} if, whenever $a, b \in C$ with $a\leq b$, then $c\in C$ whenever $a\leq c \leq b$. Furthermore, if $X$ is a po-group a subset $C$ is \emph{directed} if every element of $C$ can be written as a difference of positive elements.
\end{definition}

\begin{definition}
    For abelian po-groups $A$ and $B$, there is a po-group $A\circ B$, called the \emph{lexicographic sum} of $A$ and $B$. The group structure on $A\circ B$ is that of $A\oplus B$, the direct sum of $A$ and $B$, with order relation $(a_1,b_1)\leq (a_2,b_2)$ if and only if $a_1 < a_2$ or $a_1=a_2$ and $b_1\leq b_2$. The other common partial order on the group $A\oplus B$ is the product order, where $(a_1,b_1)\leq (a_2, b_2)$ if and only if $a_1\leq a_2$ and $b_1\leq b_2$. The po-group with this partial order will be denoted $A\oplus B$.
\end{definition}

The following theorem, proved in \cite{MR0364213}, provides insight into the interplay between the structure of the group of divisibility of a domain and the structure of the domain itself.

\begin{theorem}\cite{MR0364213} \label{T:mottthm}
Let $R$ be a domain and $G(R)$ its group of divisibility. Then there is a one to one order reversing correspondence between the saturated multiplicative subsets of $R$ and the convex directed subgroups of $G(R)$.

\end{theorem}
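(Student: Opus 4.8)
The plan is to realize the correspondence through the canonical quotient map $v\colon QF(R)^\sharp \to G(R)$, $\alpha \mapsto \alpha U(R)$. This is a surjective group homomorphism with kernel $U(R)$, and by the very definition of the divisibility order its positive cone is $G(R)^+ = v(R^\sharp)$: indeed $\alpha U(R) \geq 0$ means $1 \mid \alpha$, i.e.\ $\alpha \in R$. The two maps I would set up are $\Phi(S) = \langle v(S)\rangle$, the subgroup of $G(R)$ generated by the image of a saturated multiplicative set $S$, and $\Psi(H) = \{\, r \in R^\sharp : v(r) \in H \,\}$, the elements of $R^\sharp$ whose value lands in a convex directed subgroup $H$. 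The bulk of the work is to check that each map lands in the intended class and that the two are mutually inverse.

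That $\Phi(S)$ is \emph{directed} is immediate: every $g \in \langle v(S)\rangle$ is a $\mathbb{Z}$-combination of elements $v(s)$ with $s \in S$, each lying in $G(R)^+$, so $g$ is a difference of two positive elements of the subgroup. The real content is \emph{convexity}, and here I would first record that, since $S$ is multiplicatively closed, every element of $\langle v(S)\rangle$ has the form $v(s)-v(t)$ with $s,t\in S$. The key lifting lemma is then that the positive cone of $\Phi(S)$ is exactly $v(S)$: if $v(s)-v(t)\geq 0$, then $s/t\in R$, say $s=rt$, and saturation forces $r\in S$, so $v(s)-v(t)=v(r)\in v(S)$. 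Granting this, convexity follows in the same spirit: if $0\leq g\leq v(r)$ with $r\in S$, lift $g=v(a)$ for some $a\in R^\sharp$ (using $G(R)^+=v(R^\sharp)$), note $a\mid r$, and invoke saturation once more to get $a\in S$, whence $g\in\Phi(S)$. The reverse passage is the mirror image: for $\Psi(H)$, multiplicativity is just the subgroup property of $H$, while the \emph{saturated} direction—$v(x)+v(y)\in H$ forces $v(x),v(y)\in H$—is precisely where convexity of $H$ enters, via $0\leq v(x)\leq v(x)+v(y)\in H$. I expect this convexity$\leftrightarrow$saturation interlock, together with the lifting lemma, to be the main obstacle; the rest is formal.

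To see the maps are inverse I would compute both composites directly. The identity $v(\Psi(H))=H^+$ (each $h\in H^+\subseteq G(R)^+$ lifts to some $r\in R^\sharp$ with $v(r)=h\in H$, so $r\in\Psi(H)$) gives $\Phi(\Psi(H))=\langle H^+\rangle=H$, the last equality because $H$ is directed. For $\Psi(\Phi(S))=S$, the inclusion $\supseteq$ is trivial, while $\subseteq$ uses that associates of elements of $S$ stay in $S$ (units lie in any nonempty saturated multiplicative set, since $s=s\cdot 1$ and $u\cdot u^{-1}=1$ force $1,u\in S$).

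Finally I would pin down the order behavior. The construction $S\mapsto\langle v(S)\rangle$ is transparently inclusion-preserving; indeed $\langle v(S)\rangle$ is the kernel of the induced surjection $G(R)\twoheadrightarrow G(R_S)$ onto the group of divisibility of the localization $R_S$, which grows with $S$. To recover the order-reversing orientation of the statement, one pairs $S$ with the subgroup determined by its complement $R^\sharp\setminus S$—a union of prime ideals—since complementation reverses inclusions. I would make this bookkeeping explicit so that the final bijection matches the direction asserted in the theorem.
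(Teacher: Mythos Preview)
The paper does not supply its own proof of this theorem: it is quoted from Mott \cite{MR0364213} and used as a black box (see the sentence ``The following theorem, proved in \cite{MR0364213}\ldots'' immediately preceding the statement). So there is no in-paper argument to compare against; your write-up would serve as an added proof of a cited result.

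On the substance, your bijection is the standard one and is set up correctly. The maps $\Phi(S)=\langle v(S)\rangle$ and $\Psi(H)=\{r\in R^\sharp: v(r)\in H\}$ are well defined on the intended classes, the ``lifting lemma'' $\Phi(S)^+=v(S)$ is exactly the point where saturation is used, and the two composites collapse as you indicate (directedness gives $\langle H^+\rangle=H$, and associates of elements of $S$ lie in $S$). This is essentially how Mott's correspondence is usually presented.

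The one genuine gap is your last paragraph. The bijection you construct is \emph{order-preserving}: $S_1\subseteq S_2$ gives $\Phi(S_1)\subseteq\Phi(S_2)$. Your proposed fix---``pair $S$ with the subgroup determined by its complement $R^\sharp\setminus S$''---does not work as written, because the complement of a saturated multiplicative set is a union of prime ideals, not a saturated multiplicative set, so neither $\Phi$ nor any obvious variant applies to it, and you have not said what ``the subgroup determined by'' such a complement means. Either produce an explicit second bijection (e.g.\ via prime ideals and convex directed subgroups, if that is what you intend) and check it reverses inclusions, or simply note that the paper's phrase ``order reversing'' may be a slip: the only use made of the theorem (in Lemma~\ref{L:lem2}) is to transfer a linear ordering from convex directed subgroups to saturated multiplicative sets, and that conclusion is indifferent to whether the bijection preserves or reverses order.
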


The results that follow show that if the group of divisibility of a domain is a lexicographic sum of a linearly ordered group and a trivially ordered group (an antichain group), then $R$ is a PVD. It is not clear, however, if this condition on the group of divisibility is necessary for $R$ to be a PVD.

\begin{lemma}\label{L:lem1}
    If $R$ is a domain with $G(R)\cong_o L \circ A$, where $L$ is linearly ordered and $A$ is trivially ordered, then the set of convex directed subgroups is linearly ordered.
\end{lemma}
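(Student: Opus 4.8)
The plan is to work with the concrete realization $G := G(R) \cong_o L \circ A$ and to classify its convex directed subgroups outright: I will show they are exactly the trivial subgroup together with the preimages $\pi^{-1}(C)$ of the \emph{nonzero} convex subgroups $C$ of $L$, where $\pi : L \circ A \to L$ is the projection. Since the convex subgroups of a linearly ordered abelian group are totally ordered by inclusion (a classical fact about ordered groups, available from the references of Section~\ref{S:PVDs}), this immediately yields that the convex directed subgroups of $G$ form a chain under inclusion.

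First I would record the shape of the positive cone. Because $A$ is trivially ordered, the lexicographic order gives $(l,a) \geq (0,0)$ precisely when $l > 0$, or else $l = 0$ and $a = 0$; hence $G^+ = \{(l,a) : l > 0\} \cup \{(0,0)\}$. In particular the kernel $A' := \ker \pi = \{0\} \oplus A$ is an antichain whose only nonnegative element is $0$. This last feature is what the directedness hypothesis will exploit.

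The heart of the argument is two complementary observations about a convex directed subgroup $H$. \emph{(i)} If $H$ contains any element $(l,a)$ with $l \neq 0$, then $A' \subseteq H$: replacing $(l,a)$ by its inverse if necessary we may take $l > 0$, so $(2l,2a) = 2(l,a) \in H \cap G^+$; for an arbitrary $b \in A$ the element $(l,\, a+b)$ then satisfies $0 \leq (l,\,a+b) \leq (2l,2a)$, the right inequality holding simply because $l < 2l$, so convexity forces $(l,\,a+b) \in H$ and subtracting $(l,a)$ gives $(0,b) \in H$. \emph{(ii)} If $\pi(H) = \{0\}$, that is $H \subseteq A'$, then directedness forces $H = \{0\}$, since every element of $H$ must be a difference of nonnegative elements of $H$, and the only nonnegative element of $A'$ is $0$. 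I expect step \emph{(i)} to be the main obstacle: it is exactly where the lexicographic (rather than product) order is indispensable, and where the sandwiching inequality must be arranged so that both endpoints genuinely lie in $H \cap G^+$.

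Combining these, any convex directed $H \neq \{0\}$ has $\pi(H) \neq \{0\}$ by the contrapositive of \emph{(ii)}, hence contains some $(l,a)$ with $l \neq 0$, hence contains all of $A'$ by \emph{(i)}, and therefore equals $\pi^{-1}(\pi(H))$. A short verification then shows that $\pi(H)$ is a convex subgroup of $L$, and conversely that $\pi^{-1}(C)$ is convex and directed for every nonzero convex subgroup $C$ of $L$ (directedness follows by writing $(0,b) = (c_0,b) - (c_0,0)$ for any fixed positive $c_0 \in C$). Thus the convex directed subgroups of $G$ are precisely $\{0\}$ together with the sets $\pi^{-1}(C)$ as $C$ ranges over the nonzero convex subgroups of $L$. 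As $C \mapsto \pi^{-1}(C)$ is inclusion-preserving, the $C$'s form a chain, and $\{0\}$ sits below all of them, the set of convex directed subgroups of $G$ is linearly ordered, which is the claim.
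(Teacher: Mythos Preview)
Your argument is correct. The core technical device is the same as the paper's---the sandwiching inequality $0 \le (l,a+b) \le (2l,2a)$ that forces comparability once the first coordinate is positive---but you organize it differently. The paper proceeds directly: given two convex directed subgroups $M,N$ and elements $m\in M^+$, $n\in N^+$, it observes that either they are already comparable or, being incomparable, share the same first coordinate, whence $m\le 2n$ and convexity gives $m\in N$; a short case split then yields $M\subseteq N$ or $N\subseteq M$. You instead classify all convex directed subgroups outright as $\{0\}$ together with $\pi^{-1}(C)$ for nonzero convex $C\le L$, and then invoke the standard fact that convex subgroups of a linearly ordered group form a chain. Your route is slightly less self-contained (it imports that classical fact) but yields more: it identifies the convex directed subgroups explicitly, which is useful downstream when one wants to match them with saturated multiplicative subsets via Theorem~\ref{T:mottthm}. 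Both arguments are short, and the paper's has the virtue of being entirely elementary, while yours makes the lattice of such subgroups transparent.
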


\begin{proof}
Suppose $M$ and $N$ are two convex directed subgroups of $G(R)$. Since the order isomorphism holds, we know that $M$ and $N$ correspond to two convex directed subgroups in $L \circ A$. We would like to prove that the groups $M$ and $N$ are related by inclusion. Since $M$ and $N$ are convex directed, they are generated by their positive elements. Suppose that $m \in M^+$. If $m \leq n$ for some $n \in N^+$, then $m \in N^+$, because $N^+$ is convex directed and $M \subseteq N$. Alternatively, if for every $n \in N^+$ we have $n \leq m$, then $N \subseteq M$. The only other case to consider is that there exists an $n \in N^+$ such that $n$ is incomparable to $m$, which means if $m \mapsto (l_1, a_1)$ in $L \circ A$ and $n \mapsto (l_2, a_2)$, then $l_1 = l_2$ and $a_1 \not= a_2$. So we have that $(l_1, a_1) \leq (2l_1, 2a_2)$ since $l_1 \in L$ and $l_1 \geq 0$. Thus $(l_1, a_1) \leq (2l_1, 2a_2) = (2l_2, 2a_2) = 2(l_2, a_2)$ which corresponds to $2n \in N^+$, hence $m \leq 2n$, $m \in N^+$, so $M \subseteq N$ and the set of convex directed subgroups is linearly ordered.
\end{proof}

\begin{lemma}\label{L:lem2}
    Let $R$ be as in Lemma \ref{L:lem1}. Then:
    \begin{enumerate}
    \item The set of prime ideals of $R$ is linearly ordered.
    \item The domain $R$ is local.
    \end{enumerate}
\end{lemma}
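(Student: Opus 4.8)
The plan is to deduce both statements from Lemma \ref{L:lem1} by passing through Mott's correspondence (Theorem \ref{T:mottthm}). The guiding observation is that the prime ideals of $R$ sit, via complementation, inside the family of saturated multiplicative subsets of $R$, and that family is in turn governed by the convex directed subgroups of $G(R)$, which Lemma \ref{L:lem1} has just shown to be linearly ordered.

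First I would record the standard fact that if $P$ is a prime ideal of $R$, then its complement $S_P := R \setminus P$ is a saturated multiplicative subset: the primality condition $xy \in P \iff x \in P \text{ or } y \in P$ is, by contraposition, exactly the statement $xy \in S_P \iff x \in S_P \text{ and } y \in S_P$. Moreover $P \mapsto S_P$ is order reversing and injective, since $P \subseteq Q$ if and only if $S_Q \subseteq S_P$. Thus the poset of prime ideals of $R$ embeds, order-reversingly, into the poset of saturated multiplicative subsets of $R$ ordered by inclusion.

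Next I would invoke Theorem \ref{T:mottthm} to identify the saturated multiplicative subsets of $R$, up to order reversal, with the convex directed subgroups of $G(R)$. By Lemma \ref{L:lem1} the latter are linearly ordered by inclusion, hence so are the saturated multiplicative subsets. In particular the subfamily $\{\, S_P : P \text{ prime}\,\}$ is linearly ordered, and pulling this back through the order-reversing injection $P \mapsto S_P$ shows that the prime ideals of $R$ are linearly ordered by inclusion. This establishes statement (1).

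Statement (2) then follows formally: every maximal ideal is prime, so any two maximal ideals of $R$ are comparable by (1), and maximality forces them to coincide; a commutative ring with a unique maximal ideal is local, so $R$ is local. I expect no serious obstacle here, since the substantive work has already been carried out in Lemma \ref{L:lem1}. The only point demanding care is the bookkeeping of the directions of the several order-reversing maps, so that ``linearly ordered saturated multiplicative subsets'' is transported correctly back to ``linearly ordered prime ideals'' rather than to its reverse.
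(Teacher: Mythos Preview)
Your proposal is correct and follows essentially the same route as the paper: both arguments pass from Lemma~\ref{L:lem1} through Mott's correspondence (Theorem~\ref{T:mottthm}) to conclude that the saturated multiplicative subsets, hence the prime ideals via complementation, are linearly ordered, and then deduce locality from comparability of maximal ideals. The paper additionally verifies that the maximal ideal equals $R\smallsetminus U(R)$, but this is extra information not required by the lemma's statement.
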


\begin{proof}
From Lemma \ref{L:lem1}, the set of convex directed subgroups of $R$ is linearly ordered. By Theorem \ref{T:mottthm}, there is a one-to-one order correspondence between the convex directed subgroups of $G(R)$ and the saturated multiplicative subsets of $R$. Thus, the saturated multiplicative subsets of $R$ are linearly ordered by set inclusion as well. If $P_1$, $P_2$ are prime ideals of $R$ and each prime ideal is the complement of a saturated multiplicative subset in $R$, we may say that $S_1 = R \smallsetminus P_1$ and $S_2 = R \smallsetminus P_2$ where $S_1$ and $S_2$ are saturated multiplicative subsets of $R$. Without loss of generality suppose that $S_1 \subseteq S_2$, so we have that $P_2 \subseteq P_1$ and the prime ideals of $R$ are linearly ordered. Thus, since $\textrm{Max}(R)$ is nonempty, $\textrm{Max}(R) = \{M\}$ for $M$ some maximal ideal of $R$. Further, we know that $M = R \smallsetminus U(R)$ because $U(R)$ is a saturated multiplicative subset. Therefore, $R \smallsetminus U(R)$ is a prime ideal of $R$ and $M \subseteq R \smallsetminus U(R)$, so from $M$ being maximal and the set of prime ideals being linearly ordered we have $M = R\smallsetminus U(R)$.

Since the set of prime ideals of $R$ is linearly ordered by set inclusion, if there were two maximal ideals, they would be comparable under set inclusion. This clearly shows that $R$ is a local domain.
\end{proof}

\begin{lemma}\label{L:lem3}
     Let $R$ be as in Lemma \ref{L:lem1}. Then the maximal ideal of $R$ is strongly prime.
\end{lemma}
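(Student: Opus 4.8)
The plan is to transfer the strongly-prime condition on $M$ into a statement about the positive cone of $G(R)\cong_o L\circ A$, where it becomes an immediate consequence of $L$ being linearly ordered. Recall from Definition \ref{D:groupofdivis} the canonical map $v\colon QF(R)^\sharp\to G(R)$, $v(\alpha)=\alpha U(R)$; by the definition of the order by divisibility one has $v(\alpha)\geq 0$ if and only if $\alpha\in R$, and $v(\alpha)=0$ if and only if $\alpha\in U(R)$. By Lemma \ref{L:lem2}, $R$ is local with maximal ideal $M=R\smallsetminus U(R)$, so for $\alpha\in QF(R)^\sharp$ we have $\alpha\in M$ exactly when $v(\alpha)>0$ strictly.

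First I would pin down the strict positive cone of $L\circ A$. Since $A$ is trivially ordered, $(0,a)\geq(0,0)$ forces $a=0$, so the definition of the lexicographic order gives $(l,a)>0$ if and only if $l>0$. Composing the order isomorphism $G(R)\cong_o L\circ A$ with the projection onto the $L$-coordinate therefore yields a group homomorphism $w\colon QF(R)^\sharp\to L$ with the property that, for $\alpha\in QF(R)^\sharp$, $\alpha\in M$ if and only if $w(\alpha)>0$. The point is that membership in $M$ is detected solely by the linearly ordered component.

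With this setup, the verification is short. Let $a,b\in QF(R)$ with $ab\in M$; the case where $a$ or $b$ is zero is immediate since $0\in M$, so assume $a,b\in QF(R)^\sharp$. Because $w$ is a homomorphism, $w(ab)=w(a)+w(b)$, and $ab\in M$ gives $w(a)+w(b)>0$ in $L$. If both $w(a)\leq 0$ and $w(b)\leq 0$, then $w(a)+w(b)\leq 0$ because $L$ is a po-group, contradicting $w(a)+w(b)>0$. Hence $w(a)>0$ or $w(b)>0$, that is $a\in M$ or $b\in M$, so $M$ is strongly prime.

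I expect the only real obstacle to be conceptual rather than computational: correctly identifying, through the order isomorphism, which elements of $G(R)$ correspond to $M$ and which to $U(R)$, and isolating the fact that strict positivity in a lexicographic sum with a trivially ordered second factor is governed entirely by the first coordinate. Once the homomorphism $w$ onto the linearly ordered group $L$ is in hand, the strongly-prime property is forced by order-compatibility of addition, with no case analysis beyond the single dichotomy above.
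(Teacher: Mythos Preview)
Your proof is correct and follows essentially the same route as the paper's: both identify membership in $M$ with strict positivity of the $L$-coordinate under the isomorphism $G(R)\cong_o L\circ A$, then use the linear order on $L$ to deduce from $l_1+l_2>0$ that $l_1>0$ or $l_2>0$. The only difference is cosmetic---you name the projection $w\colon QF(R)^\sharp\to L$ explicitly, while the paper works directly with the coordinates $(l_i,a_i)$.
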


\begin{proof}
 From Lemma \ref{L:lem2} we know that $R$ has a unique maximal ideal $M$. Suppose that $\alpha$,$\beta \in QF(R)$ the quotient field of $R$, such that $\alpha \beta \in M$. We want to show that $\alpha \in M$ or $\beta \in M$. First, note that $\alpha \beta= 0$ if and only if $\alpha = 0$ or $\beta = 0$ since $R$ is a domain, hence $\alpha \in M$ or $\beta \in M$. Alternatively, if $\alpha \beta \not= 0$ then since $\alpha\beta \in M$ then $\alpha \beta$ corresponds to $\alpha \beta U(R)$ in $G(R)$ and that corresponds to $(l_1 + l_2, a_1 + a_2) > (0, 0)$ in $L \circ A$. This is because all elements in $G(R)$ map to positive elements in $L\circ A$, thus $l_1+l_2 > 0$ so $l_1 > -l_2$ so if $l_1 > 0$ then $\alpha \in M$ otherwise $l_1 \leq 0$ means that $l_2 > 0$ and $\beta\in M$ and thus $\alpha \in M$ or $\beta \in M$ and $M$ is strongly prime and $R$ is a PVD.
\end{proof}

\begin{theorem} \label{T:secondthm}
If $R$ is a domain with $G(R)\cong_o L \circ A$, where $L$ is linearly ordered and $A$ is trivially ordered, then $R$ is a PVD.
\end{theorem}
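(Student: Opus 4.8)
The plan is to observe that this theorem is essentially the assembly of the three preceding lemmas, routed through the characterization of PVDs in Theorem \ref{T:uniqueoverring}. The hypothesis $G(R)\cong_o L\circ A$ is exactly the standing assumption of Lemmas \ref{L:lem1}, \ref{L:lem2}, and \ref{L:lem3}, so all of their conclusions are available to us directly. The strategy is therefore to invoke condition (3) of Theorem \ref{T:uniqueoverring}, which states that a local domain is a PVD if and only if it has a unique maximal ideal that is strongly prime, and to supply its two hypotheses from the lemmas.

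First I would apply Lemma \ref{L:lem2} to conclude that $R$ is local and that its set of prime ideals is linearly ordered by inclusion; in particular $R$ has a unique maximal ideal $M$. This places us in the setting required by Theorem \ref{T:uniqueoverring}, namely a local domain with a distinguished maximal ideal. Next I would apply Lemma \ref{L:lem3} to conclude that this maximal ideal $M$ is strongly prime. At that point condition (3) of Theorem \ref{T:uniqueoverring} is verified, and the equivalence (3) $\Rightarrow$ (1) in that theorem yields immediately that $R$ is a PVD.

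Since the genuine content of the argument—linearity of the convex directed subgroups, the passage to prime ideals via Theorem \ref{T:mottthm}, and the order-theoretic verification that $M$ is strongly prime—has already been carried out in Lemmas \ref{L:lem1}--\ref{L:lem3}, there is no substantive obstacle remaining at this stage. The only point requiring minor care is purely bookkeeping: confirming that the maximal ideal produced in Lemma \ref{L:lem2} (identified there as $R\smallsetminus U(R)$) is the \emph{same} ideal shown to be strongly prime in Lemma \ref{L:lem3}, so that a single ideal $M$ simultaneously witnesses both hypotheses of condition (3). Because $R$ is local with unique maximal ideal, this identification is automatic, and the proof amounts to citing the three lemmas and the relevant implication of Theorem \ref{T:uniqueoverring}.
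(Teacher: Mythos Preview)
Your proposal is correct and matches the paper's own proof essentially line for line: the paper invokes Lemma~\ref{L:lem2} for the unique maximal ideal, Lemma~\ref{L:lem3} for strong primeness, and then Theorem~\ref{T:uniqueoverring} (condition~(3)) to conclude $R$ is a PVD. Your extra remarks about Lemma~\ref{L:lem1} and the identification of the maximal ideal are harmless elaborations but not needed.
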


\begin{proof}
    From Lemma \ref{L:lem2} we have that $R$ must have a unique maximal ideal. From Lemma~\ref{L:lem3} we see that this maximal ideal must be strongly prime. From Theorem~\ref{T:uniqueoverring}, this is enough to show that $R$ is a PVD.

\end{proof}

Integral domains $R$ where $G(R)$ satisfies the conditions of Theorem \ref{T:secondthm} are quite common. Section \ref{S:lattices} contains many different kinds of examples. Further, all valuation domains are included in this collection, as is the class of all BVDs.

\section{A Particular Class of PVDs}\label{S:lattices}

In this section, the congruence lattices of rings from a class of PVDs are characterized. The goal of this section is to represent the lattice of congruences of integral domains of the form $K+XF[[X]]$, where $K\subseteq F$ is a field extension.

\begin{example}\label{E:baseexample} Let $K\subseteq F$ be a field extension. Then $R=K+XF[[X]]$ is a PVD. This is because $V=F[[X]]$ is an overring of $R$ and $V$ is a valuation domain. Also, $R$ and $V$ are local with maximal ideal $\langle X \rangle$. So $G(R) \cong_o G(V) \circ U(V)/U(R)$. However $G(V) \cong_o \mathbb{Z}$, $U(V) \cong F^\sharp$, and $U(R) \cong K^\sharp$. So, we have that $G(R) \cong_o \mathbb{Z} \circ F^\sharp/K^\sharp$.
\end{example}

Taking Example \ref{E:baseexample}, examining a specific field extension $K\subseteq F$, and constructing the entire lattice helps motivate a more general method to handle all field extensions.

\begin{example}\label{E:specificexample} Consider $F = \mathbb{F}_4=\{0,1,a,b\}$, the four element field, and $K=\{0,1\}$. Then from Example \ref{E:baseexample}, $G(R) \cong_o \mathbb{Z} \circ \{1,a,b\}/\{1\}$, or equivalently $G(R) \cong_o \mathbb{Z} \circ \{1,a,b\}$, where the second factor is ordered trivially. Thus, the Hasse diagram in Figure \ref{F:fig1} extended below and above for every element of $\mathbb{Z}$ represents the dual of the entire po-set of principal ideals of $R$.

\begin{figure}
    $$\xymatrix{(n,1)\ar@{-}[d]\ar@{-}[rd]\ar@{-}[rrd] & (n,a)\ar@{-}[d]\ar@{-}[ld]\ar@{-}[rd] & (n,b)\ar@{-}[d]\ar@{-}[ld]\ar@{-}[lld] \\ (n-1,1) & (n-1,a) & (n-1,b) } $$
\caption{A portion of the divisibility relation on $\mathbb{F}_2+X\mathbb{F}_4[[X]]$}\label{F:fig1}
\end{figure}
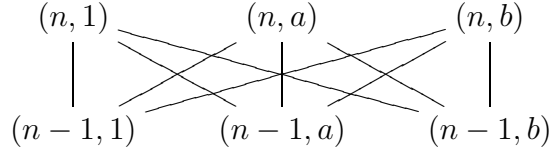

As in any integral domain, the dual of the positive cone of $G(R)$ represents the order relationship between the principal ideals. Therefore the principal ideals of $R$ correspond to the Hasse diagram in Figure \ref{F:fig2}.

\begin{figure}
    $$\xymatrix@R=.25cm{& R\ar@{-}[ld]\ar@{-}[d]\ar@{-}[rd] & \\(1,1)& (1,a)& (1,b)\\& \vdots &\\(n,1)\ar@{-}[rd]\ar@{-}[d]\ar@{-}[rrd] & (n,a)\ar@{-}[d]\ar@{-}[ld]\ar@{-}[rd] & (n,b)\ar@{-}[d]\ar@{-}[ld]\ar@{-}[lld] \\ (n+1,1) & (n+1,a) & (n+1,b)\\ & \vdots & \\ & \langle 0 \rangle&}$$
    \caption{Hasse diagram of principal ideals of $\mathbb{F}_2+X\mathbb{F}_4[[X]]$}\label{F:fig2}
\end{figure}
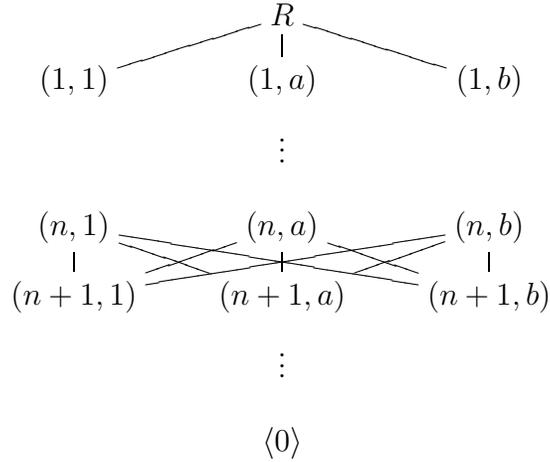

Now, all that is needed to complete the Hasse diagram of the congruences of $R$ is the meets and joins of the principal ideals. This is trivial by looking at a specific natural number and computing the meets and joins there. So, consider the three ideals corresponding to the generators $(n,1)$,$(n,a)$, and $(n,b)$. These ideals are generated by power series with lowest term $X^n$, $aX^n$, and $bX^n$ respectively. Therefore, since the joins are taken by linear combinations of the generators of these principal ideals, we have that $(n,1)\vee (n,a)=(n,a) \vee (n,b)=(n,1) \vee (n,b) = \langle \mathbb{F}_4 X^n \rangle$. This ideal may be identified by just $\langle X^n \rangle_V$ where the subscript denotes
the fact that the generation of the ideal takes place in the valuation overring.

It is also evident that $(n,1) \wedge (n,a) = (n,1) \wedge (n,b) = (n,a) \wedge (n,b) = \langle X^{n+1} \rangle_V$. So finally the Hasse diagram of the congruence lattice of $R$ is as in Figure \ref{F:fig3}.

\begin{figure}
    $$\xymatrix@R=.25cm{& R\ar@{-}[d] & \\ & \langle X \rangle_V\ar@{-}[ld]\ar@{-}[d]\ar@{-}[rd] & \\(1,1)\ar@{-}[rd]& (1,a)\ar@{-}[d]& (1,b)\ar@{-}[ld]\\& \vdots \ar@{-}[d]&\\ & \langle X^n \rangle_V \ar@{-}[ld]\ar@{-}[d]\ar@{-}[rd]& \\ (n,1)\ar@{-}[rd]& (n,a)\ar@{-}[d]& (n,b)\ar@{-}[ld]\\ & \langle X^{n+1} \rangle _V\ar@{-}[ld]\ar@{-}[d]\ar@{-}[rd]& \\(n+1,1)\ar@{-}[rd]& (n+1,a)\ar@{-}[d]& (n+1,b)\ar@{-}[ld]\\ & \vdots  \ar@{-}[d]& \\ & \langle 0 \rangle&}$$
    \caption{Congruence lattice of $R$}\label{F:fig3}
\end{figure}
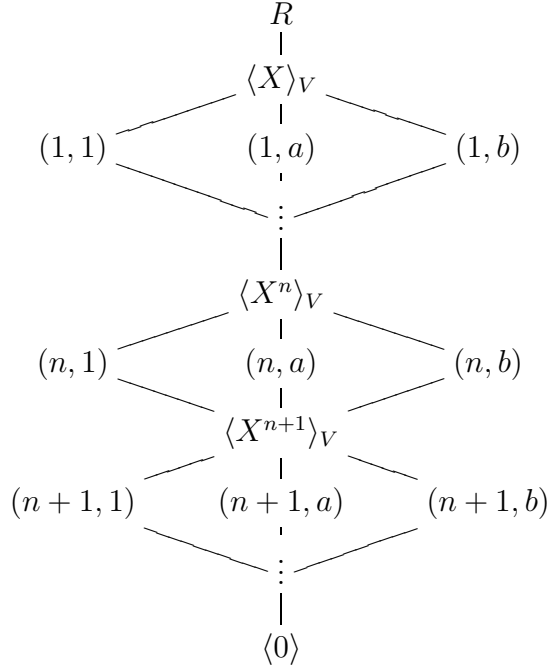

As one may see, even the simplest examples of these constructions of PVDs have congruence lattices that are highly non-distributive.

\end{example}

The motivation for the generalization of this example to arbitrary field extensions comes from the construction of the meets and the joins of the principal ideals. Since the joins in particular can be expressed as linear combinations (over $K$) of the generators of the two previous ideals, this invites us to more thoroughly investigate the vector space congruence structure of $\mathrm{AG}(F,K)$ (where $K\subseteq F$), in order to generate the non-principal ideals of $K+XF[[X]]$.

\begin{definition}
    For a po-set $P$ with order relation $\leq$, we define the \emph{dual poset} $P^\partial$ to be the poset over the set $P$ with order relation $\leq_\partial$, where $a\leq_\partial b$ if and only if $b\leq a$.
\end{definition}

\begin{definition}
    Let $K \subseteq F$ be a field extension. Then the poset of all nonzero subspaces of $F$ as a vector space over $K$ is denoted $\mathrm{AG}(F,K)$.
\end{definition}

\begin{theorem}

Let $K\subseteq F$ be a field extension, then the lattice of ideals of the power series ring $R=K+XF[[X]]$ is lattice isomorphic to $1\oplus (\mathbb{N}^\partial \circ \mathrm{AG}(F,K))\oplus 1$, where $1$ is the one element lattice.

\end{theorem}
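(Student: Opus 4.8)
The plan is to build an explicit order isomorphism between the poset of ideals of $R=K+XF[[X]]$ and $1\oplus(\mathbb{N}^\partial\circ\mathrm{AG}(F,K))\oplus 1$, and then to invoke the fact that an order isomorphism between two lattices automatically preserves meets and joins. First I would record the local structure already visible from Example \ref{E:baseexample}: the units of $R$ are exactly the series with nonzero constant term (which then lies in $K^\sharp$), so $R$ is local with maximal ideal $M=XF[[X]]$. Consequently the only ideal not contained in $M$ is $R$ itself, while $\langle 0\rangle$ is the least ideal; these two become the outer copies of the one-element lattice $1$, the top and the bottom. All the content is therefore in describing the proper nonzero ideals, each of which consists of series of order $\geq 1$.

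The core step is a structure theorem for these proper nonzero ideals. Given such an ideal $I$, set $n=\min\{\mathrm{ord}(f):0\neq f\in I\}\geq 1$ and let $W=W(I)\subseteq F$ be the set of coefficients of $X^n$ occurring among elements of $I$ of order exactly $n$, together with $0$. Closure of $I$ under addition makes $W$ an additive subgroup, and multiplication by unit scalars $k\in K^\sharp\subseteq R$ makes it closed under $K$-scaling, so $W$ is a $K$-subspace of $F$; it is nonzero because $n$ is attained by some $f_0$ with nonzero $X^n$-coefficient, hence $W\in\mathrm{AG}(F,K)$. The decisive point — and the step I expect to be the main obstacle — is to prove $\langle X^{n+1}\rangle_V\subseteq I$. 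For this I fix $f_0\in I$ of order exactly $n$; for every $u\in F[[X]]$ the element $Xu$ lies in $XF[[X]]\subseteq R$, so $Xuf_0\in I$, and since $Xf_0$ equals $X^{n+1}$ times a unit of $V=F[[X]]$, the elements $Xuf_0$ already fill out $X^{n+1}F[[X]]=\langle X^{n+1}\rangle_V$. This is exactly where the shape $K+XF[[X]]$, with unrestricted $F$-coefficients in positive degree, is used. With $\langle X^{n+1}\rangle_V\subseteq I\subseteq\langle X^n\rangle_V$ in hand, a short check gives $I=\{f:\mathrm{ord}(f)\geq n \text{ and the }X^n\text{-coefficient of }f\text{ lies in }W\}$, so $I\mapsto(n,W)$ is a bijection from proper nonzero ideals onto pairs $(n,W)$ with $n\geq 1$ and $W\in\mathrm{AG}(F,K)$.

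Next I would compute the order. Comparing minimal orders and leading subspaces shows $I\subseteq I'$ if and only if $n>n'$, or $n=n'$ and $W\subseteq W'$. Under the order isomorphism $n\mapsto n-1$ of $\{1,2,\dots\}$ with $\mathbb{N}$ (which sends the maximal ideal $M=(1,F)$ to the top element $(0,F)$), this is precisely the lexicographic-sum order of $\mathbb{N}^\partial\circ\mathrm{AG}(F,K)$, since $n>n'$ in $\mathbb{N}$ means $n<n'$ in $\mathbb{N}^\partial$ while $W\subseteq W'$ is the order of $\mathrm{AG}(F,K)$. Re-attaching $R$ on top and $\langle 0\rangle$ on the bottom yields an order isomorphism onto $1\oplus(\mathbb{N}^\partial\circ\mathrm{AG}(F,K))\oplus 1$. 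Finally, because both posets are lattices, this order isomorphism is automatically a lattice isomorphism; if an explicit check is wanted, I would verify that $I+I'$ and $I\cap I'$ match the lexicographic join and meet, the only subtle case being two ideals at the same level $n$ whose subspaces intersect trivially, where $I\cap I'=\langle X^{n+1}\rangle_V$ drops a level exactly as $(n,W)\wedge(n,W')=(n+1,F)$ does in the lexicographic sum. This completes the argument.
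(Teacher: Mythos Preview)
Your proof is correct and arrives at the same classification the paper does: a proper nonzero ideal is determined by the pair $(n,W)$ consisting of its minimal order and the $K$-subspace of leading coefficients at that order, with the crucial fact being $\langle X^{n+1}\rangle_V\subseteq I$. The paper's argument reaches the same place by a slightly different route: it uses the description $G(R)\cong_o\mathbb{Z}\circ F^\sharp/K^\sharp$ from Example~\ref{E:baseexample} to parametrize \emph{principal} ideals as pairs $(n,\alpha)$, writes an arbitrary nonzero ideal as a join $\bigvee_\lambda(n_\lambda,\alpha_\lambda)$, reduces to the generators at the minimal level $N$, and then identifies the resulting join with the $K$-span of the leading coefficients. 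Your version is the more elementary and self-contained of the two---you work directly with the ideal rather than passing through the group-of-divisibility formalism, and you make explicit the step $\langle X^{n+1}\rangle_V\subseteq I$ (via $Xu\cdot f_0$) that the paper only gestures at with the phrase ``each principal ideal contains all series with degree lower than $N$.'' The paper's framing, on the other hand, ties the result more tightly to the divisibility machinery developed in the surrounding sections. Either way the content is the same.
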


\begin{remark}
The structure of the above lattice can be decomposed into three distinct parts:

 \begin{enumerate}
    \item the first one-element lattice, representing the ideal $\langle 0\rangle$;
    \item the lexicographic produce which first indexes the lowest power occurring on the indeterminate inside the ideal and then compares according to the number of generators required to create the available leading coefficients;
    \item the last one-element lattice represents the entire ring.
 \end{enumerate}

It is certainly true that lexicographic products of po-sets do not always result in lattices. In this case, however, identifying the full vector space of $F$ over $K$ at a given coordinate $n$ with the trivial subspace in the next higher coordinate $n-1$, it is as if the lattice of subspaces of $F$ over $K$ were repeated once for each value $n\in \mathbb{N}$, resulting in a lattice.
\end{remark}

\begin{proof}
The unique valuation overring of $R$ is $V=F[[X]]$ which has positive cone of divisibility order isomorphic to $\mathbb{N}$. Also, recall, as a consequence of Example \ref{E:baseexample}, that the poset of nonzero elements of $R$ ordered under divisibility $R^\sharp/U(R)$ is order isomorphic to $V^\sharp/U(V)\circ U(V)/U(R)$. Therefore, when identifying a principal ideal of $R$ we may refer to an ordered pair $(n,\alpha)$, where $n \in \mathbb{N}$ and $\alpha \in U(V)$ is a coset representative for $\alpha U(R)$. Of course, the largest principal ideal is $(0,1)$, which corresponds to the
entire ring, and the smallest is $\langle 0 \rangle$, since the principal ideals are ordered dually to the lattice of divisibility.

Let $L$ be the lattice of ideals of $R$. Then any nonzero ideal $I \in L$ can be expressed as a join of principal ideals $I =\bigvee\limits_{\lambda\in \Lambda} (n_\lambda, \alpha_\lambda)$ for an indexing set $\Lambda$. Recall, that any principal ideal $(n,\alpha)\subseteq (m, \beta)$ if and only if $m>n$ or $m=n$ and $\alpha\beta^{-1} \in U(R)$. So, let $N= \inf\limits_{\lambda \in \Lambda} n_\lambda$, which exists since the ideal generated is not the zero ideal. Then $I = \bigvee\limits_{\sigma\in\Sigma} (N, \alpha_\Sigma)$ for $\Sigma=\{\lambda\in \Lambda |$ $ n_\lambda=N\}$.

At this point, the join of the principal ideals involved can be expressed as their sum up to multiples from $R$. Since each principal ideal contains all series with degree lower than $N$ we need only consider unit multiples (from $R$) acting on the generating set. So, in reality
$$I=\bigvee\limits_{\sigma \in \Sigma} (N,\alpha_\sigma)= \bigg\langle x^N\alpha \bigg| \alpha \in \sum\limits_{\sigma \in \Sigma} \alpha_\sigma \beta_\lambda \text{ and } \beta \in K \bigg\rangle $$

Thus, the lattice of ideals $L$ is isomorphic to Figure \ref{F:fig4} where $V_k$ are copies of the poset of nontrivial subspaces of the vector space $\mathrm{AG}(F,K)$ where the elements generated in that subspace show up as the available coefficients on the lowest term of the series of degree $k$. In short $L \cong 1 \oplus (\mathbb{N}^\partial \circ \mathrm{AG}(F,K))\oplus 1$ (in the category of lattices) as desired.
\end{proof}

\begin{figure}
    $$\xymatrix@R=.25cm{&R\ar@{-}[d]&\\&V_1\ar@{-}[d]&\\&V_2\ar@{-}[d]&\\& \vdots\ar@{-}[d]&\\&\langle 0 \rangle&}$$
    \caption{Basic structure of congruence lattice}\label{F:fig4}
\end{figure}
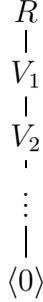

This section concludes with one final example which examines the entire lattice of ideals of a PVD of type $K+XF[[X]]$.

\begin{example}
Let $R= \mathbb{Q}+ X (\mathbb{Q}[\sqrt[3]{2}])[[X]]$. It is impossible to give an entire description of the lattice of ideals of $R$ since $\mathrm{AG}(\mathbb{Q}[\sqrt[3]{2}], \mathbb{Q})$ is infinite, but it is possible to model what happens with selected elements. We can observe this in the lattice of Figure \ref{F:fig5}.

\begin{figure}
    $$\xymatrix@R=.25cm{& R\ar@{-}[d] & \\
    & \langle X \rangle_V\ar@{-}[ld]\ar@{-}[d]\ar@{-}[rd] & \\
    \langle(1,1),(1, \sqrt[3]{2}) \rangle\ar@{-}[d]\ar@{-}[rd] & \langle (1,1), (1,\sqrt[3]{4}) \rangle \ar@{-}[ld]\ar@{-}[rd] & \langle(1, \sqrt[3]{4}) , (1,\sqrt[3]{2}) \rangle \ar@{-}[ld]\ar@{-}[d] \\
    (1,1)\ar@{-}[rd]& (1,\sqrt[3]{2}) \ar@{-}[d] & (1, \sqrt[3]{4}) \ar@{-}[ld] \\
    & \langle X^2 \rangle_V\ar@{-}[ld] \ar@{-}[d]\ar@{-}[rd] & \\
    \vdots\ar@{-}[rd] & \vdots\ar@{-}[d]& \vdots\ar@{-}[ld] \\
    & \langle 0 \rangle &}$$
    \caption{Congruence lattice of $\mathbb{Q}+ X (\mathbb{Q}[\sqrt[3]{2}])[[X]]$}\label{F:fig5}
\end{figure}
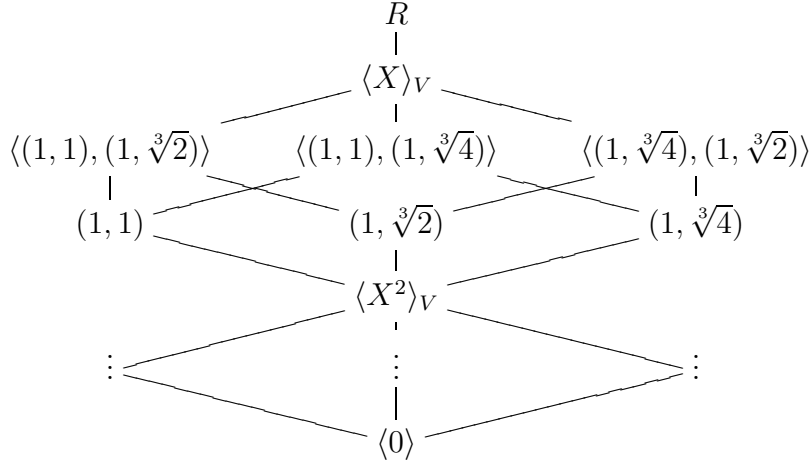

This method can be easily generalized to any field extension, but becomes very difficult to realize as the degree and the complexity of the field extension increases.
\end{example}

\section{PVDs from Generalized Power Series}\label{S:powerseries}

In the previous section, a source of PVDs was obtained from restricting the leading coefficients of power series over a field. Another source of PVDs is found by restricting the generalized power series from Ribenboim's \cite{MR1289092}. This method allows one to construct many PVDs which are not necessarily atomic, as is exhibited.

\begin{definition}
    A commutative monoid $M$ on a po-set under $\leq$ is called a \emph{linearly ordered monoid} if $\leq$ is a total order and if, for every $a,b,c \in M$, with $a\leq b$, $ac\leq bc$.
\end{definition}

\begin{definition}
    A po-set $X$ is said to be \emph{narrow} if each induced antichain is finite. A po-set $X$ is said to be \emph{Artinian (Noetherian)} if there are no infinite decreasing (increasing) sequences in $X$.
\end{definition}

Generalized power series over rings $R$ are given by specifying a partially ordered monoid $M$. The usual power series are given by considering generalized power series over rings with monoid $\mathbb{N}$. The definition is as follows.

\begin{definition}
    Given a ring $R$ and a partially ordered monoid $M$, the \emph{generalized power series ring} denoted $A=R[[M]]$ is the collection of functions $f:M \rightarrow R$ with support on a narrow, Artinian subset of $M$. Addition is given by $(f+g)(m)=f(m)+g(m)$ and multiplication given by the convolution $(f*g)(m)=\sum\limits_{m_1\in M}f(m_1)g(m-m_1)$. If, in addition, $M$ is linearly ordered, the support of each function is well ordered and we may define the minimum of the support, denoted $\min(f)$.

    Given an element $m\in M$, we may define the \emph{delta function centered at m} as $\delta_m (x):=\delta_{mx}$, the Kronecker delta.
\end{definition}

The units of generalized power series have been characterized in a similar fashion to those of classical power series. The theorem is stated for the special case that $R$ is a field.

\begin{proposition}\cite{MR1289092}
    Let $A=R[[M]]$ be a generalized power series where $R$ is a field. Then $U(A)=\{ f(m) | f(0)\not= 0\}$.
\end{proposition}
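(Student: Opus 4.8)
The plan is to identify units through the ``constant term'' homomorphism for the easy inclusion, and to invert any series with nonzero constant term by a geometric-series construction whose convergence is supplied by the narrow Artinian support condition. Throughout I work in the setting in which the identity $0$ is the least element of $M$ (the positively ordered case relevant here; as I note at the end, this is genuinely needed for the stated formula). Write $M^{++}=\{m\in M: m>0\}$ for the strictly positive elements and $g^{*k}$ for the $k$-fold convolution power, with $g^{*0}=\delta_0$.

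First I would dispose of the inclusion $U(A)\subseteq\{f\mid f(0)\neq 0\}$. Because $0$ is least in the linearly ordered monoid $M$, a relation $m_1+m_2=0$ forces $m_1=m_2=0$, so the convolution formula gives $(f*g)(0)=f(0)g(0)$; hence evaluation at $0$ is a ring homomorphism $\epsilon\colon A\to R$ with $\epsilon(\delta_0)=1$. A unit of $A$ is therefore carried to a unit of $R$, so $f\in U(A)$ implies $f(0)\neq 0$. For the reverse inclusion, suppose $f(0)\neq 0$. As $R$ is a field, $f(0)^{-1}\delta_0\in U(A)$, and after replacing $f$ by $f(0)^{-1}\delta_0 * f$ I may assume $f(0)=1$. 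Setting $g:=\delta_0-f$, we have $g(0)=0$, so $\operatorname{supp}(g)\subseteq M^{++}$.

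Next I would invert $f=\delta_0-g$ by the formal sum $h:=\sum_{k\geq 0}g^{*k}$, the analog of $(1-g)^{-1}$. Granting that this family is summable in $A$, the inverse relation is the telescoping identity $(\delta_0-g)*\sum_{k\geq 0}g^{*k}=\sum_{k\geq 0}g^{*k}-\sum_{k\geq 1}g^{*k}=g^{*0}=\delta_0$, so $f\in U(A)$ and both inclusions are established.

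The technical heart, and the step I expect to be the main obstacle, is the summability of $\{g^{*k}\}$: I must show that for each fixed $m\in M$ only finitely many $k$ satisfy $g^{*k}(m)\neq 0$, and that $\bigcup_{k}\operatorname{supp}(g^{*k})$ is again narrow and Artinian, so that $h(m):=\sum_k g^{*k}(m)$ is a well-defined element of $A$. This is exactly where the narrow Artinian hypothesis is indispensable, and I would deduce it from Ribenboim's structural lemma that the submonoid generated by a narrow Artinian subset of $M^{++}$ is narrow and Artinian with each element admitting only finitely many factorizations into generators. In the linearly ordered case narrowness is automatic and Artinian means well-ordered; writing $s_0=\min(\operatorname{supp}g)>0$, the estimate $\operatorname{supp}(g^{*k})\subseteq\{m: m\geq ks_0\}$ together with that finiteness lemma forces both the finiteness of $\{k: g^{*k}(m)\neq 0\}$ and the well-ordering of the total support. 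Finally, I would record that the positivity of the cone is essential rather than cosmetic: over the group $M=\mathbb{Z}$ the series $\delta_{-1}$ is a unit with $\delta_{-1}(0)=0$, so the hypothesis that $0$ is the least element of $M$ cannot be dropped.
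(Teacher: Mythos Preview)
The paper does not supply its own proof of this proposition; it is quoted from Ribenboim's work and stated without argument, so there is nothing in the paper to compare your attempt against directly.

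That said, your proposal is the standard and correct argument, and is essentially the one Ribenboim gives: the evaluation-at-$0$ map handles the forward inclusion, and the geometric-series inversion $h=\sum_{k\ge 0} g^{*k}$ handles the reverse once summability is secured by the narrow Artinian support lemma. Your observation that the hypothesis ``$0$ is the least element of $M$'' is genuinely required is accurate and matches the only setting in which the paper uses the result, namely $M=\Gamma^+$ for a linearly ordered group $\Gamma$; the statement as printed in the paper (for arbitrary partially ordered monoids) is indeed too broad, as your $M=\mathbb{Z}$ counterexample shows.
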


Of particular interest at present is when the partially ordered monoid $M$ is the positive cone of a linearly ordered group $G$. Generalized power series rings of this type have been studied for some time and the following result, due to Hahn in \cite{MR0171864}, is the standard example for a class of rings showing that the classification in terms of a group of divisibility has at least one ring for each isomorphism class of po-groups.

\begin{theorem}\cite{MR0171864}
    The generalized power series ring $R=F[[\Gamma^+]]$ over a field $F$ and linearly ordered group $\Gamma$ has group of divisibility $G( R)\cong_o \Gamma$.
\end{theorem}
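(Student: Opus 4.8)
The plan is to realize $R$ as the valuation ring of an explicit $\Gamma$-valued valuation on $QF(R)^\sharp$ whose kernel is exactly $U(R)$, and then to read off the order isomorphism from the first isomorphism theorem together with the criterion for order-preservation recorded after the definition of po-group. The central object is the map $v \colon QF(R)^\sharp \to \Gamma$ defined on $R^\sharp$ by $v(f) = \min(f)$ and extended to quotients by $v(f/g) = \min(f) - \min(g)$.

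First I would establish that $v$ is a well-defined group homomorphism from the multiplicative group $QF(R)^\sharp$ to the additive group $\Gamma$. The crux is the identity $\min(f*g) = \min(f) + \min(g)$ for nonzero $f, g \in R$. This is where both hypotheses enter essentially: because $\Gamma$ is linearly ordered, the pair $(\min(f), \min(g))$ is the \emph{unique} pair of support elements summing to $\min(f)+\min(g)$, and because $F$ is a field the corresponding coefficient $f(\min f)\,g(\min g)$ is nonzero, so no cancellation occurs at the bottom of the support. From $v(fg) = v(f) + v(g)$ on $R^\sharp$ the multiplicativity on all of $QF(R)^\sharp$ and the well-definedness on cosets follow routinely, and surjectivity is immediate since $v(\delta_\gamma) = \gamma$ for $\gamma \in \Gamma^+$ and $v(\delta_{-\gamma}^{-1}) = \gamma$ for $\gamma < 0$.

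Next I would identify $\ker v$ with $U(R)$. The quoted characterization of the units of a generalized power series ring over a field gives $U(R) = \{f \in R^\sharp : f(0) \neq 0\} = \{f : \min(f) = 0\}$. Shifting the support of a nonzero $f$ down to $0$ yields a factorization $f = \delta_{\min f}\cdot u$ with $u \in U(R)$; consequently any $\phi = f/g$ with $v(\phi) = 0$ equals $\delta_{\min f - \min g}(u/u') = u/u'$ for units $u,u'$, hence lies in $U(R)$, and the reverse inclusion is clear. The first isomorphism theorem then supplies a group isomorphism $\bar v \colon G(R) = QF(R)^\sharp/U(R) \to \Gamma$.

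Finally I would verify that $\bar v$ is an order isomorphism. By Definition~\ref{D:groupofdivis} the positive cone $G(R)^+$ consists of the classes $\phi\, U(R)$ with $\phi \in R^\sharp$, and the same factorization shows $\phi \in R^\sharp$ exactly when $v(\phi) \geq 0$; thus $\bar v$ carries $G(R)^+$ bijectively onto $\Gamma^+$. Invoking the observation that a group homomorphism is order preserving if and only if it sends positive cone into positive cone, both $\bar v$ and $\bar v^{-1}$ are order preserving, so $G(R) \cong_o \Gamma$. I expect the only genuine obstacle to be the support computation $\min(f*g) = \min(f)+\min(g)$, i.e. confirming that the convolution does not annihilate the leading term; once that is in hand and $R$ is recognized as the valuation ring of $v$, the remainder is bookkeeping.
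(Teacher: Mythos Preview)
Your proposal is correct and follows essentially the same route as the paper: both define the map $G(R)\to\Gamma$ by sending $\frac{f}{g}U(R)$ to $\min(f)-\min(g)$, identify the kernel with $U(R)$ via the unit characterization, and verify the order isomorphism by matching positive cones. Your version is slightly more explicit in isolating the key computation $\min(f*g)=\min(f)+\min(g)$ and in packaging the argument through the first isomorphism theorem, whereas the paper works directly with the reduced representatives $\frac{\delta_{\min(f)}}{\delta_{\min(g)}}$, but the substance is the same.
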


\begin{proof}
    Given any element $f\in R^\sharp$ we have that $f= (\delta_{\min(f)})*u$ where $u$ is a unit of $R$. So, given any element $\frac{f}{g}U(R)\in QF(R)^\sharp/U(R)$, we may write $\frac{f}{g}U(R)$ as $\frac{
    \delta_{\min(f)}}{\delta_{\min(g)}}U(R)$.

    Additionally, $\frac{\delta_{\min(f)}}{\delta_{\min(g)}}\in R$ if and only if $\min(g)\leq \min(f)$. This is due to the fact that a delta function convolved with another function $f$ simply translates the support of $f$ by the support of the delta function. Therefore, the convolution of a delta function with another function $f$ is again a delta function if and only if $f$ is a delta function.

    We now consider the map $$\phi: G(R)\rightarrow \Gamma;  \frac{\delta_{\min(f)}}{\delta_{\min(g)}}U(R) \mapsto \min(f)-\min(g)$$
    It must be shown that $\phi$ is an isomorphism of po-groups. We know that $u\in U(R)$ if and only if $\min(u)=0$. Therefore $\phi$ is well defined on cosets. Further, since $\min:R\rightarrow \Gamma^+$ is an order preserving group homomorphism, we have that $\phi(R/U(R))\subseteq \Gamma^+$ so $\phi$ is order preserving and $\phi$ is a group homomorphism when extended to $G(R)$. Since $\phi(\delta_\gamma U(R))=\gamma$ and $\phi(\frac{1}{\delta_\gamma}U(R))=-\gamma$, $\phi$ is surjective. Finally, $\phi^{-1}$ is order preserving, since $\phi^{-1}(\gamma)=\delta_\gamma U(R)$ for all $\gamma\in \Gamma^+$.
\end{proof}

\begin{definition}\label{D:respowerseries}
    Let $K\subseteq F$ be a field extension. Let $\Gamma$ be a linearly ordered group with positive cone $\Gamma^+$. The subring $S$ of $R=F[[\Gamma^+]]$ consisting of functions $f$, for which $f(0)\in K$ is called the \emph{series ring} over $K\subseteq F$, with exponents in $\Gamma^+$.
\end{definition}

\begin{lemma}\label{L:units}
    The group of units of a series ring $S$ over $K\subseteq F$ is $U(S)=U(R)\cap S$.
\end{lemma}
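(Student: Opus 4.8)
The plan is to prove the two inclusions $U(S)\subseteq U(R)\cap S$ and $U(R)\cap S\subseteq U(S)$ separately, with essentially all of the work concentrated in the second. For the first inclusion I would argue as follows: if $f\in U(S)$, then by definition $f\in S$ and there is some $g\in S$ with $f*g=\delta_0$. Since $S$ is a subring of $R$, this same $g$ lies in $R$ and witnesses that $f$ is a unit of $R$, so $f\in U(R)\cap S$. This direction requires no computation and uses only that the identity and the inverse already live in the larger ring.

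For the reverse inclusion I would take $f\in U(R)\cap S$. By the characterization of $U(R)$ we have $f(0)\neq 0$, and because $f\in S$ we have $f(0)\in K$; together these give $f(0)\in K^\sharp$. Let $g\in R$ be the inverse of $f$ in $R$, so that $f*g=\delta_0$. The goal is to show $g\in S$, that is $g(0)\in K$, since then $g$ is the required inverse of $f$ inside $S$. The key computation is the constant coefficient of the convolution, $(f*g)(0)=\sum_{m_1\in\Gamma^+}f(m_1)g(-m_1)$. Here the crucial observation is that the exponent monoid is $\Gamma^+$: the factor $g(-m_1)$ can be nonzero only when $-m_1\in\Gamma^+$, while $f(m_1)$ forces $m_1\in\Gamma^+$, and since $\Gamma$ is linearly ordered the only element with both itself and its negative in $\Gamma^+$ is $0$. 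Thus the sum collapses to the single term $f(0)g(0)$. Comparing with $\delta_0(0)=1$ yields $f(0)g(0)=1$, so $g(0)=f(0)^{-1}$; as $f(0)\in K^\sharp$ and $K$ is a field, $f(0)^{-1}\in K$, whence $g(0)\in K$ and $g\in S$. Therefore $f\in U(S)$.

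The only step that is more than bookkeeping is this collapse of the constant-term convolution to the lone summand $f(0)g(0)$, so the main obstacle is invoking the order structure of $\Gamma^+$ correctly—namely that no nonzero element and its negative can both be positive—to rule out any other index $m_1$ contributing. Everything else (that $S$ is closed under the ring operations, that $\delta_0\in S$ since $\delta_0(0)=1\in K$, and that $K$ being a field supplies $f(0)^{-1}$) follows immediately from the definitions and needs no separate argument.
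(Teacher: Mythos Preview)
Your proof is correct and follows the same two-inclusion strategy as the paper; the only difference is that where the paper simply asserts that $f^{-1}(0)\in K$, you spell out the reason by collapsing the convolution at $0$ via the positivity of $\Gamma^+$. This is a welcome elaboration rather than a different approach.
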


\begin{proof}
    Suppose that $f\in S$ is a unit of $S$. This means there exists an element of $S$ so that $f*g=1\in S\subseteq R$, which means $f\in U(R)$. Now suppose that $f\in U(R)\cap S$. This means $f(0)\in F^\sharp\cap K$ which means $f$ has an inverse in $R$ whose component at $0$ is in $K$. Thus $g^{-1}\in S$ as well.
\end{proof}

\begin{theorem}\label{T:respowerseries}
    Let $S$ be a series ring over $K\subseteq F$, with exponents in $\Gamma^+$. Then $G(S)\cong_o \Gamma \circ F^\sharp/K^\sharp$.
\end{theorem}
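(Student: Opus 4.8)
The plan is to build an explicit order isomorphism $G(S)\to\Gamma\circ F^\sharp/K^\sharp$ out of two multiplicative invariants attached to a generalized power series: its minimal exponent and its leading coefficient. A preliminary point I would settle first is that $S$ and $R=F[[\Gamma^+]]$ share a quotient field. Since $S\subseteq R$ gives $QF(S)\subseteq QF(R)$, it suffices to check $R\subseteq QF(S)$; and for this I would note that for any $c\in F^\sharp$ and any $\gamma>0$ both $\delta_\gamma$ and $c\,\delta_\gamma$ lie in $S$ (their value at $0$ is $0\in K$), so that $c=(c\,\delta_\gamma)/\delta_\gamma\in QF(S)$, whence $F\subseteq QF(S)$. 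Factoring an arbitrary $f\in R^\sharp$ as $\delta_{\min(f)}*u$ with $u$ a unit of $R$ and normalizing its leading coefficient into $K$ then places $f$ in $QF(S)$. Thus $QF(S)=QF(R)$ and $G(S)=QF(R)^\sharp/U(S)$.

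The core construction is as follows. Because $\Gamma$ is linearly ordered, the support of a convolution product attains its minimum at $\min(f)+\min(g)$ and the coefficient there is the product of the two leading coefficients with no cancellation; hence both $\min:R^\sharp\to\Gamma^+$ and $\mathrm{lc}:R^\sharp\to F^\sharp$ (the value of $f$ at $\min(f)$) are monoid homomorphisms. I would extend them to a group homomorphism $QF(R)^\sharp\to\Gamma\oplus F^\sharp$ by $f/g\mapsto(\min(f)-\min(g),\ \mathrm{lc}(f)\,\mathrm{lc}(g)^{-1})$, and then compose the second coordinate with the projection $F^\sharp\to F^\sharp/K^\sharp$ to obtain $\bar\psi:QF(R)^\sharp\to\Gamma\oplus F^\sharp/K^\sharp$.

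Next I would identify the kernel of $\bar\psi$ with $U(S)$. One inclusion is immediate from Lemma \ref{L:units}: a unit of $S$ has $\min=0$ and leading coefficient $f(0)\in K^\sharp$. For the converse, if $\bar\psi(q)=0$ then $\min(f)=\min(g)$, so the image of $q$ in $G(R)\cong_o\Gamma$ under Hahn's isomorphism \cite{MR0171864} is $0$, forcing $q\in U(R)$; then $\mathrm{lc}(q)\in K^\sharp$ gives $q(0)\in K^\sharp$, i.e. $q\in U(S)$. Surjectivity is witnessed by the elements $c\,\delta_\gamma$, so $\bar\psi$ descends to a group isomorphism $G(S)\xrightarrow{\sim}\Gamma\oplus F^\sharp/K^\sharp$.

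The step I expect to carry the real content is matching the orders, and it is here that the trivially ordered second factor appears naturally. The positive cone of $G(S)$ is the image of $S^\sharp$, so I would compute $\bar\psi(S^\sharp)$ by splitting into the cases $\min(q)>0$ and $\min(q)=0$. In the first case the image is a pair $(\gamma,\ \cdot)$ with $\gamma>0$, automatically positive in the lexicographic sum regardless of the second coordinate; in the second case membership in $S$ forces $q(0)\in K^\sharp$, so the second coordinate collapses to the identity of $F^\sharp/K^\sharp$ and the image is $(0,0)$. Conversely every element of $(\Gamma\circ F^\sharp/K^\sharp)^+$ — which, since $F^\sharp/K^\sharp$ is trivially ordered, consists exactly of the pairs with positive first coordinate together with $(0,0)$ — is hit by some $c\,\delta_\gamma\in S^\sharp$ or by a constant in $K^\sharp$. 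This shows $\bar\psi$ carries $G(S)^+$ onto $(\Gamma\circ F^\sharp/K^\sharp)^+$, and by the criterion that a group isomorphism is an order isomorphism precisely when it maps positive cone onto positive cone, the desired $G(S)\cong_o\Gamma\circ F^\sharp/K^\sharp$ follows. The main obstacle is the bookkeeping that makes this order calculation honest: one must be sure the leading-coefficient invariant is genuinely well defined modulo $U(S)$ and that the two cases above exhaust $S^\sharp$, which is exactly where the hypothesis that $\Gamma$ is linearly ordered — guaranteeing a well-defined $\min$ and no cancellation in leading terms — is essential.
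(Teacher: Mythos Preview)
Your proposal is correct and follows essentially the same route as the paper: define the map $G(S)\to\Gamma\circ F^\sharp/K^\sharp$ via the pair (minimum of support, leading coefficient mod $K^\sharp$), verify it is a group isomorphism, and match positive cones by the case split $\min>0$ versus $\min=0$. Your organization differs only cosmetically---you define the map on $QF(R)^\sharp$ first and compute its kernel, whereas the paper defines it directly on cosets and checks well-definedness---and you add a preliminary verification that $QF(S)=QF(R)$, a point the paper uses implicitly but does not prove.
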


\begin{proof}
    Observe that every element of $S^\sharp$ may be written as $$f(\min(f))\delta_{\min(f)} * u$$ where $u\in U(S)$. It must be noted that $f(\min(f))\in F^\sharp$ and not necessarily in $K^\sharp$. In the case where $f(\min(f))\in K^\sharp$, $$f(\min(f))\delta_{\min(f)} * u = \delta_{\min(f)}*v$$

    Elements of the group of divisibility $G(S)$ are therefore of the form $\frac{ f(\min(f))\delta_{\min(f)}}{g(\min(g))\delta_{\min(g)} }U(S)$. This notation is shortened for the rest of the proof by identifying such an element with $\frac{\alpha \delta_{f^0}}{\beta \delta_{g^0}}U(S)$, where $\alpha , \beta\in F^\sharp$ and $f^0:=\min(f), g^0:=\min(g)\in \Gamma^+$.

    Define $$\psi: G(S)\rightarrow \Gamma \circ F^\sharp/K^\sharp;  \frac{\alpha \delta_{f^0}}{\beta \delta_{g^0}}U(S) \mapsto (f^0-g^0, \frac{\alpha}{\beta}K^\sharp)$$

    It must be shown that $\psi$ is an isomorphism of po-groups. Observe first that $\psi$ is well defined on cosets. This is because $\psi( U(S)) = (0, K^\sharp)=(f^0-g^0, \frac{\alpha}{\beta}K^\sharp)=\psi( \frac{\alpha \delta_{f^0}}{\beta \delta_{g^0}})$ if $\frac{\alpha}{\beta}\in K^\sharp$ and $f^0-g^0=0$. To see that $\psi$ is an abelian group homomorphism, let $\frac{\alpha_1 \delta_{f^0_1}}{\beta_1 \delta_{g^0_1}}U(S), \frac{\alpha_2 \delta_{f^0_2}}{\beta_2 \delta_{g^0_2}}U(S)\in G(S)$, then $$\psi(\frac{\alpha_1 \delta_{f^0_1}}{\beta_1 \delta_{g^0_1}}\frac{\alpha_2 \delta_{f^0_2}}{\beta_2 \delta{g^0_2}}U(S))$$

    $$=(f^0_1+f^0_2-g^0_1-g^0_2 , \frac{\alpha_1\alpha_2}{\beta_1\beta_2}K^\sharp)=(f^0_1-g^0_1,\frac{\alpha_1}{\beta_1}K^\sharp)+(f^0_2-g^0_2,\frac{\alpha_2}{\beta_2}K^\sharp)$$ which is clearly the sum of the images of the individual factors under $\psi$. It is easy to see that $\psi$ preserves inverses.

    Observe that $\psi$ is surjective by taking any $\gamma\in \Gamma$ and any $\alpha K^\sharp\in F^\sharp/K^\sharp$ and writing it as the image $\psi(\alpha \delta_\gamma)U(S)$ if $\gamma\in \Gamma^+$, and \newline $\psi(\frac{\alpha\delta_0}{\delta_\gamma}U(S))$ if $-\gamma\in \Gamma^+$. Since $\Gamma$ is linearly ordered, these are the only possibilities.

    It must be shown that $\psi$ is an isomorphism of abelian groups. If there were an element $\frac{\alpha \delta_{f^0}}{\beta \delta_{g^0}}U(S)$ such that $\psi(\frac{\alpha \delta_{f^0}}{\beta \delta_{g^0}}U(S))=(0, K^\sharp)$, then $f^0=g^0$ and $\frac{\alpha}{\beta}\in K^\sharp$ by the definition of $\psi$. That is to say, the minima of the supports of $f$ and $g$ were equal and the ratio of the values at that location was an element of $K^\sharp$. Thus, $\frac{\alpha \delta_{f^0}}{\beta \delta_{g^0}}U(S)=U(S)$ by Lemma \ref{L:units}.

    To show that $\psi$ and $\psi^{-1}$ preserve order, it is sufficient to show that they preserve the positive cones of the po-groups. First, considering $\alpha \delta_{f^0} U(S)\in G(S)^+=S^\sharp/U(R)$, we have that $\psi(\alpha \delta_{f^0} U(S))=(f^0, \alpha K^\sharp)$. Since $\alpha \delta_{f^0} \in S^\sharp$, $f^0\not= 0$ and $f^0\in \Gamma^+$ or $f^0=0$ and $\alpha \in K^\sharp$, in either case $(f^0,\alpha K^\sharp)$ is in the positive cone of $\Gamma\circ F^\sharp/K^\sharp$.

    Now suppose that $(\gamma, \alpha K^\sharp)$ is in the positive cone of $\Gamma\circ F^\sharp/K^\sharp$. That is, either $\gamma\not=0$ and $\gamma\in \Gamma^+$ or $\gamma=0$ and $\alpha\in K^\sharp$. It is easily seen that $\phi^{-1}(\gamma, \alpha K^\sharp)=\alpha \delta_\gamma U(S)\in R^\sharp/U(S)$.

\end{proof}

\begin{corollary}
    Let $S$ be a series ring over $K\subseteq F$, with exponents in $\Gamma^+$. Then $S$ is a PVD.
\end{corollary}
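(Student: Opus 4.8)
The plan is to reduce this corollary directly to Theorem \ref{T:secondthm}, whose hypothesis is exactly that the group of divisibility decomposes as a lexicographic sum $L \circ A$ of a linearly ordered group $L$ and a trivially ordered group $A$. The substantive work has already been done in establishing Theorem \ref{T:respowerseries}, so the corollary should follow by a short verification of hypotheses.

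First I would invoke Theorem \ref{T:respowerseries}, just established, to obtain the order isomorphism $G(S) \cong_o \Gamma \circ F^\sharp/K^\sharp$. The factor $\Gamma$ is linearly ordered by hypothesis, since it is precisely the linearly ordered group whose positive cone $\Gamma^+$ supplies the exponents; thus $\Gamma$ plays the role of $L$.

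The one point requiring justification is that $F^\sharp/K^\sharp$ is trivially ordered, so that it may serve as the antichain group $A$, and this is where I would focus the argument. It follows from the positive-cone computation already carried out in the proof of Theorem \ref{T:respowerseries}: an element $(\gamma, \alpha K^\sharp)$ lies in the positive cone of $\Gamma \circ F^\sharp/K^\sharp$ precisely when $\gamma \in \Gamma^+ \smallsetminus \{0\}$, or else $\gamma = 0$ and $\alpha \in K^\sharp$. Restricting to the subgroup $\{0\}\times F^\sharp/K^\sharp$, the only positive element is therefore the identity coset $K^\sharp$, which is exactly the statement that $F^\sharp/K^\sharp$ carries the trivial order. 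Consequently two cosets sharing the same $\Gamma$-level are comparable only when they coincide, confirming the antichain structure.

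With both hypotheses in hand, I would apply Theorem \ref{T:secondthm} with $L = \Gamma$ and $A = F^\sharp/K^\sharp$ to conclude that $S$ is a PVD. I do not expect a serious obstacle here: the real content lives in Theorem \ref{T:respowerseries} and Theorem \ref{T:secondthm}, and the only genuine verification is the triviality of the order on $F^\sharp/K^\sharp$, which is immediate from the explicit description of the positive cone in the preceding proof.
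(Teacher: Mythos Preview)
Your proposal is correct and follows essentially the same approach as the paper, which simply cites Theorem~\ref{T:respowerseries} and Theorem~\ref{T:secondthm} together. Your additional verification that $F^\sharp/K^\sharp$ carries the trivial order is a reasonable elaboration of what the paper leaves implicit, but the overall strategy is identical.
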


\begin{proof}
    This is a consequence of Theorem \ref{T:respowerseries} and Theorem \ref{T:secondthm}.
\end{proof}

This section concludes with some examples of PVDs coming from series rings over $K\subseteq F$.

\begin{example} Observe the following PVDs obtained by specifying a coefficient field extension and a positive cone of an abelian po-group:

    \begin{enumerate}
        \item The (nonatomic) PVDs with $F=\mathbb{R}$, $K=\mathbb{Q}$, and $\Gamma=\mathbb{Z}\circ\mathbb{Z}$. These domains behave similarly to Laurent series domains over $\mathbb{R}$ with constant terms in $\mathbb{Q}$ and variables $X$ and $Y$ where $Y$ may have negative exponents.
        \item Series rings over field extensions $K\subseteq F$ with $\Gamma=\mathbb{R}$ are another class of nonatomic PVDs. These domains may be thought of as formal power series with restricted leading coefficients where the powers on the indeterminants are allowed to be any nonnegative real number.
        \item The only atomic examples of generalized restricted power series with $\Gamma$ linearly ordered are the standard restricted power series rings with $\Gamma=\mathbb{Z}$. This is a consequence of the next section.
    \end{enumerate}

\end{example}

\section{Classification of Atomic PVDs}

In this section, Theorem \ref{T:secondthm} is used along with Maney's classification of BVDs in \cite{MR2032466} to give several equivalent conditions for a domain $R$ to be an atomic PVD. There are many nonatomic PVDs, for example as exhibited in the previous section.It is also seen that the assumption of atomicity on a PVD $R$ implies $R$ is an HFD.

\begin{definition}
    Given an HFD $R$ with quotient field $QF(R)$, an \emph{overring} of $R$ is any ring $T$ such that $R\subseteq T \subset QF(R)$. An overring $T$ is \emph{boundary positive} if every element $x\in T^\sharp$ has at least as many irreducibles of $R$ on the numerator as the denominator. We say that $T$ is \emph{boundary complete} if, for every $x\in T^\sharp$ with an equal amount of irreducibles on the numerator and denominator, we have $x \in R^\sharp$, which is equivalent to $x\in U(R)$.
\end{definition}

Of particular interest at this point is the the classification of BVDs in terms of their groups of divisibility, proven in \cite{MR2032466}.

\begin{theorem}\label{T:Maney}
    Let $R$ be a domain with complete integral closure $R'$. Then $R$ is a BVD if and only if $G(G)\cong_o \mathbb{Z}\circ U(R')/U(R)$.
\end{theorem}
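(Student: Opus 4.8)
The plan is to use the half-factorial length function as the homomorphism onto the $\mathbb{Z}$-component and to prove both implications by comparing positive cones. Since a BVD is by definition an HFD, every nonzero element of $R$ has a well-defined number of irreducible factors; this \emph{length} extends to a group homomorphism $\ell\colon QF(R)^\sharp\to\mathbb{Z}$ by $\ell(a/b)=\ell(a)-\ell(b)$, and because units have length $0$ it descends to an order-preserving homomorphism $\bar\ell\colon G(R)\to\mathbb{Z}$. The whole theorem follows once I show that the positive cone $G(R)^+=R^\sharp/U(R)$ is exactly $\{g:\bar\ell(g)>0\}\cup\{\,\text{identity}\,\}$ and that $\ker\bar\ell=U(R')/U(R)$ is trivially ordered; splitting the free quotient $\mathbb{Z}$ then yields $G(R)\cong_o\mathbb{Z}\circ\ker\bar\ell$.

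For the forward direction I would first verify the two inclusions describing the cone. If $g=xU(R)$ with $x\in R^\sharp$, then $\ell(x)\ge 0$, and $\ell(x)=0$ forces $x\in U(R)$ (a length-$0$ element of an HFD is a unit), so $g$ is the identity. Conversely, if $\bar\ell(g)>0$, then writing $g=xU(R)$ with $x=a/b$ there are more irreducibles on the numerator than the denominator, so the BVD hypothesis puts $x\in R$ and hence $g\in G(R)^+$. Together these show the cone is $\{\bar\ell>0\}\cup\{e\}$, which simultaneously forces the order on $N:=\ker\bar\ell$ to be trivial, since its only element with a nonnegative representative is the identity.

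The crux is the identification $N=U(R')/U(R)$, for which I would prove $\{x\in QF(R)^\sharp:\ell(x)=0\}=U(R')$. For the inclusion $\supseteq$, if $x\in U(R')$ pick $d$ with $dx^n\in R$ for all $n$; then $\ell(d)+n\ell(x)\ge 0$ for every $n$ forces $\ell(x)\ge 0$, and applying the same to $x^{-1}\in R'$ gives $\ell(x)=0$. For $\subseteq$, given $\ell(x)=0$ write $x=a/b$ with $\ell(a)=\ell(b)=k$; taking $d=b$ one computes $bx^{n}=a^{n}/b^{\,n-1}$ of length $k\ge 0$, so each $bx^n$ lies in $R$ by the boundary property (when $k=0$ the element is already a unit), whence $x\in R'$, and symmetrically $x^{-1}\in R'$. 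I expect this to be the main obstacle, since it is where the completely integrally closed structure must be extracted from the purely combinatorial length data, and it is essential that the witness $d$ can be chosen uniformly in $n$.

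For the converse I am given $G(R)\cong_o\mathbb{Z}\circ U(R')/U(R)$ with the second factor trivially ordered, so by Theorem~\ref{T:secondthm} $R$ is a PVD, and the projection to $\mathbb{Z}$ again provides a length $\ell$ with $\ell(r)\ge 0$ for $r\in R^\sharp$ and $\ell(r)=0\iff r\in U(R)$. Atomicity and the HFD property then follow by induction on $\ell(r)$: since every $(1,c)$ lies below $(n,a_0)$ in the lexicographic order, any $r$ with $\ell(r)=n\ge 2$ admits a divisor $s\mid r$ with $\ell(s)=1$, which must be irreducible, and peeling it off strictly lowers the length; as every irreducible has length $1$, all factorizations of $r$ have length $\ell(r)$. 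Finally the boundary property is immediate: if $x\in QF(R)^\sharp$ has $\ell(x)>0$, its image lies in the positive cone of $\mathbb{Z}\circ U(R')/U(R)$, so $xU(R)\in G(R)^+$ and $x\in R$. Hence $R$ is a BVD.
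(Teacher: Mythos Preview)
This theorem is not proved in the paper; it is quoted from Maney's work \cite{MR2032466} and then used as an input to the main classification theorem in Section~6. There is consequently no proof in the paper to compare your proposal against.

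Your argument is nonetheless correct and would serve as a self-contained proof. The length homomorphism $\bar\ell$ is the natural tool, and your identification of the positive cone as $\{\bar\ell>0\}\cup\{e\}$ is precisely the content of the BVD axiom together with the fact that length-zero elements of an HFD are units. The delicate step---showing $\ker\bar\ell=U(R')/U(R)$ via the almost-integral description of the complete integral closure---is handled correctly: for the inclusion $\subseteq$, your witness $d=b$ works because $\ell(bx^{n})=nk-(n-1)k=k$ for all $n\ge 1$, so when $k>0$ the boundary property puts $bx^{n}\in R$; when $k=0$ both $a$ and $b$ are units and $x\in U(R)\subseteq U(R')$ already. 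Splitting the resulting short exact sequence over $\mathbb{Z}$ and matching positive cones gives the order isomorphism exactly as you describe.

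Two small remarks. First, the appeal to Theorem~\ref{T:secondthm} in your converse is superfluous: your direct argument from the lexicographic structure already yields atomicity, the HFD property (every irreducible has $\bar\ell$-value $1$), and the boundary condition, without needing to know that $R$ is a PVD. Second, the statement tacitly excludes fields (where $G(R)$ is trivial but $\mathbb{Z}\circ A$ never is) and treats $U(R')/U(R)$ as trivially ordered; both conventions are implicit in Maney's formulation and should be noted if you write this up.
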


The main result of this paper is the characterization of the class of atomic PVDs as domains whose group of divisibility is an element of a certain isomorphism class of po-groups. This kind of result has precedents in the literature. As previously mentioned, it was proven by Hahn and referred to in \cite{MR0171864} that $V$ is a valuation domain if and only if $G(V)\cong_o L$ for some linearly ordered group $L$. Two other classic results referred to in \cite{MR720862} are that $R$ is a UFD if and only if $G(R)\cong_o \bigoplus\limits_{p\in \mathcal{P}}\mathbb{Z}$ with $\mathcal{P}$ the set of prime elements and sum ordered under the cardinal order. The other result is that $R$ is a GCD domain if and only if $G(R)$ is a lattice-ordered group.

\begin{theorem}
    For an integral domain $R$, the following are equivalent:
    \begin{enumerate}
        \item $R$ is a BVD;
        \item $R$ is an HFD with boundary positive, boundary complete, valuation overring $V$ with $G(V)\cong_o \mathbb{Z}$;
        \item $G(R)\cong_o \mathbb{Z} \circ U(V)/U(R)$ for some overring $V$ of $R$;
        \item $R$ is an atomic PVD.
    \end{enumerate}
\end{theorem}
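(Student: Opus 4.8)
The plan is to make the divisibility condition $(3)$ the hub of the argument: I will establish $(1)\Leftrightarrow(3)$ via Maney's classification (Theorem \ref{T:Maney}), $(3)\Leftrightarrow(4)$ via Theorem \ref{T:secondthm} together with a structural decomposition of $G(R)$, and treat $(2)$ as the HFD-level reformulation sitting between them, obtained from $(4)\Rightarrow(2)\Rightarrow(1)$. The conceptual core is a single observation: for a PVD $R$ with its unique valuation overring $V$ (Theorem \ref{T:uniqueoverring}), the valuation data organizes $G(R)$ as a lexicographic sum over $G(V)$.

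The heart of the work is $(4)\Rightarrow(3)$. Starting from an atomic PVD $R$, Theorem \ref{T:uniqueoverring} produces a unique valuation overring $V$ sharing the maximal ideal $M$, and by Hahn's theorem $G(V)$ is a linearly ordered group $L$ with valuation $v\colon QF(R)^\sharp\to L$. The quotient map $G(R)\twoheadrightarrow G(V)=L$ has kernel $U(V)/U(R)$, and the key lemma is that this realizes $G(R)\cong_o L\circ\bigl(U(V)/U(R)\bigr)$ with trivially ordered second factor. The order-theoretic pivot is the identity $R\cap U(V)=U(R)$: since $v(x)=0$ forces $x\notin M=M_V$ and hence $x\in U(R)$, every nonzero class of $U(V)/U(R)$ is incomparable to $0$ (neither $x$ nor $x^{-1}$ lies in $R$), giving the trivial order, while $v(x)>0\Rightarrow x\in M\subseteq R$ yields the lexicographic positive cone. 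Atomicity then forces $L\cong_o\mathbb Z$: all irreducibles of $R$ must share one value $\epsilon$, for if $v(\pi)<v(\pi')$ then $\pi'/\pi\in M\subseteq R$ is a nonunit factor of $\pi'$, contradicting irreducibility. Consequently the length function is well defined, which is exactly the HFD property in $(2)$, and since $v(R^\sharp)=\mathbb N\epsilon$ generates $L$ we get $L=\mathbb Z\epsilon\cong_o\mathbb Z$. The extension $0\to U(V)/U(R)\to G(R)\to\mathbb Z\to0$ splits because $\mathbb Z$ is free, producing $(3)$; boundary positivity and completeness of $V$ in $(2)$ then translate into $v(x)\geq 0$ on $V^\sharp$ and the identification of net-count-zero elements of $V$ with $U(R)$.

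For the remaining arrows, $(3)\Rightarrow(4)$ is immediate: Theorem \ref{T:secondthm} (with $L=\mathbb Z$ and $A=U(V)/U(R)$ trivially ordered) gives the PVD property, and in $\mathbb Z\circ A$ every positive element equals a sum of the atoms $(1,a)$, so every element of $G(R)^+$ factors into irreducibles and $R$ is atomic. The equivalence $(1)\Leftrightarrow(3)$ follows from Theorem \ref{T:Maney} once $V$ is identified with the complete integral closure $R'$, i.e. $U(V)=U(R')$. I expect the main obstacle to be precisely this identification together with the careful verification of the boundary-positive and boundary-complete conditions in $(2)$: both require matching the intrinsic $R$-irreducible count on elements of the overring against the valuation $v$, and checking that the trivially ordered kernel $U(V)/U(R)$ coincides with the unit obstruction $U(R')/U(R)$ appearing in Maney's theorem. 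Everything else reduces either to a direct appeal to the cited results or to the routine lexicographic bookkeeping sketched above.
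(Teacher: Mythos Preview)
Your proposal is correct but organizes the implications differently from the paper. The paper runs the cycle $(1)\Rightarrow(2)\Rightarrow(3)\Rightarrow(4)\Rightarrow(1)$: it obtains $(2)$ from Maney's results with $V=R'$, builds the explicit order isomorphism in $(2)\Rightarrow(3)$ by hand using the uniformizer $z$ of the DVR $V$, and closes the loop with a short direct argument for $(4)\Rightarrow(1)$ based on the identity $V\smallsetminus R=U(V)\smallsetminus U(R)$ and the boundary function $\partial$. Your route instead makes $(3)$ the hub and supplies a genuinely different proof of $(4)\Rightarrow(3)$: the short exact sequence $0\to U(V)/U(R)\to G(R)\to G(V)\to 0$, the trivial order on the kernel coming from $R\cap U(V)=U(R)$, the observation that atomicity forces all irreducibles to share a single valuation (hence $G(V)\cong_o\mathbb{Z}$ and, as a byproduct, $R$ is an HFD), and the free splitting over $\mathbb{Z}$. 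This structural argument is more conceptual and has the merit of making the HFD conclusion explicit---something the paper's $(4)\Rightarrow(1)$ step tacitly uses when it invokes $\partial_R$. Conversely, the paper's $(4)\Rightarrow(1)$ is essentially a two-line argument once $\partial_R$ is available, and its $(2)\Rightarrow(3)$ avoids any abstract splitting by computing directly with $z$. The obstacle you flag (identifying $V$ with $R'$ and verifying the boundary conditions for $(2)$) is precisely where the paper defers to Maney's cited results rather than arguing from scratch, so your instinct about where the real work hides is accurate.
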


\begin{proof}
     $(1) \Rightarrow (2)$ For this implication we refer to Theorem \ref{T:Maney} with the overring $V=R'$, the complete integral closure of $R$.

     $(2)\Rightarrow (3)$ It must be shown that the group of divisibility $G(R)\cong_o \mathbb{Z}\circ U(V)/U(R)$ for an overring $V$ of $R$. The overring used is the one provided by assuming $(2)$. Since $V$ is an overring of $R$, it is evident that the quotient field of $V$ is the same as that of $R$. So, any element $\frac{\alpha}{\beta}\in QF(R)$ may be written as $\frac{z^mu}{z^nv}$ where $z$ is the unique prime of the rank-1 DVR overring $V$, $n, m \in \mathbb{N}$, and $u,v\in U(V)$.

     Construct set map $$\phi: G(R)\rightarrow \mathbb{Z} \circ U(V)/U(R); \frac{z^mu}{z^nv}U(R)\mapsto (m-n, uv^{-1}U(R))$$ This is a homomorphism of abelian groups. First, suppose that there are two coset representatives for an element, that is, \newline $\frac{z^mu}{z^nv}U(R)=\frac{z^muu'}{z^nv}U(R)$. These elements map to the same pair \newline $(m-n, uu'v^{-1}U(R))$ since $U(R)\subseteq U(V)$. This homomorphism also preserves the operation of multiplication of quotient field elements as $\frac{z^{m_1}u_1}{z^{n_1}v_1}\frac{z^{m_2}u_2}{z^{n_2}v_2}=\frac{z^{m_1+m_2}u_1u_2}{z^{n_1+n_2}v_1v_2}$. The kernel of $\phi$ is the set of all $\frac{z^{m}u}{z^{n}v}\in QF(R)$ such that $m=n$, and $uv^{-1}\in U(R)$, which is simply the set $U(R)$. Finally, $\phi$ is surjective because, for every $(m,vU(R))\in \mathbb{Z}\circ U(V)/U(R)$ we may write this as $\phi(z^mvU(R))$ if $m\geq 0$, or $\phi(\frac{v}{z^{-m}}U(R))$ if $m<0$.

     To conclude this implication it must be shown that the isomorphism is, in fact, an order isomorphism of abelian groups. It is sufficient to show the preservation of positive cones. Suppose that $\frac{z^mu}{z^nv}U(R)$ is positive. that means $\frac{z^mu}{z^nv}\in R$. That means $m\geq n$ since, if not, $\frac{z^mu}{z^nv}\not\in V$ and $R\subseteq V$. If $m>n$, then $z^{m-n}uv^{-1}\in V$ is a non zero non unit and is thus in $R$. If, on the other hand, $m=n$, to be in $R$ means $uv^{-1}$ is in $R$ and its inverse $u^{-1}v$ is also in $R$, which shows that $\phi$ is order preserving. If $(n, uU(R))$ is a positive element of $\mathbb{Z}\circ U(V)/U(R)$, $n>0$ in which case $(n, uU(R))=\phi(z^nuU(R))$ and $z^nu\in R$ or $n=0$ and $u\in U(R)$, this means $(n,uU(R))=\phi(uU(R))$ and $u\in R$. This finishes the verification that $\phi$ is, in fact, an isomorphism of po-groups.

     $(3) \Rightarrow (4)$ Since $G(R)\cong_o L \circ A$, Theorem \ref{T:secondthm} states that $R$ is a PVD. We need only show that $R$ is atomic. Let $r\in R^\sharp$. It must be shown that we may write $r=ux_1x_2...x_n$ where $u\in U(R)$ and $x_i$ are irreducibles. Since $r\in R^\sharp$, the element $rU(R)$ is in the positive cone of $G(R)$. This means $\phi(rU(R))=(n,uU(R))=\sum\limits_{k=1}^{n-1} (1,U(R)) + (1, uU(R))$. Furthermore, since $\phi$ is an isomorphism of po-groups and each $(1,U(R))$ and $(1,uU(R))$ is a minimal positive element of $\mathbb{Z}\circ U(V)/U(R)$, $\phi^{-1}(\sum\limits_{k=1}^{n-1} (1,U(R)) + (1, uU(R)))$ is a product of minimal positive elements of $G(R)$, which equal $rU(R)$. Thus, $r$ may be written as a product of minimal positive elements in $G(R)$, which means that $R$ is atomic.

     $(4) \Rightarrow (1)$ Suppose that $R$ is an atomic PVD. To see that $R$ is a BVD it is sufficient to show that, for any $\frac{x}{y} U(R)\in G(R)$ with $\partial_R(\frac{x}{y})\not= 0$, we have either $\frac{x}{y}$ or $\frac{y}{x}\in R$. Since $R$ has a unique valuation overring $V$, we have, without loss of generality, $\frac{x}{y}\in V$. The only way that $\frac{x}{y}\in V\smallsetminus R$ is if $\frac{x}{y}\in U(V)\smallsetminus U(R)$, since $V\smallsetminus R= U(V)\smallsetminus U(R)$. Suppose that, in fact, $\frac{x}{y}\in U(V)\smallsetminus U(R)$. This means $\partial_V(\frac{x}{y})=0$. But, since $V$ and $R$ share the same unique maximal ideal $M$, if $\partial_V(\frac{x}{y})=0$, then $\partial_R(\frac{x}{y})=0$, contradicting our assumption. Therefore $R$ is a BVD.

\end{proof}

\bibliographystyle{plain}
\bibliography{EliDivisibility1}

\end{document}